\documentclass[a4paper,reqno,12pt]{amsart}
\usepackage[footskip=0.5in, headheight = 0.5in, top=1.25in, bottom=1.25in, right=1.2in, left=1.2in]{geometry}

\usepackage{macros}

\title[Counting equitable $k$-colorings]{Counting equitable $k$-colorings in graphs of bounded clique-width}
\author{Holger Dell$^{\dagger}$}
\author{Thore Husfeldt$^{\dagger}$}
\author{Amir Nikabadi$^{\dagger,\ast}$}
\date{\today}

\address{$^\dagger$IT University of Copenhagen, Denmark.}
\address{$^\ast$Supported by the Independent Research Fund Denmark (DFF) grant agreement number 2098-00012B.}

\allowdisplaybreaks
\begin{document}

\begin{abstract}
For a graph $G$, a proper $k$-coloring of $G$ is \emph{equitable} if the sizes of any two color classes differ by at most one. The \textsc{Equitable $k$-Coloring} problem asks, for a given graph $G$ and integer $k$, whether $G$ admits an equitable $k$-coloring.
Bodlaender and Fomin (Theoretical Computer Science 2005) showed that it is polynomial-time solvable on graphs of bounded treewidth, while it remains $\NP$-hard on cographs, and thus on graphs of constant clique-width.
Fellows et al. (Information and Computation 2011) showed that the problem becomes $\mathsf{W[1]}$-hard when parameterized by tree-width (and hence clique-width) plus the number of colors~$k$.

We first show that, there exists an algorithm, given an integer $k\ge 1$ and an $n$-vertex graph $G$ together with a $w$-expression whose underlying unlabelled graph is $G$, computes the number of equitable $k$-colorings of $G$ in time
$2^{O(k\cdot w)}\cdot n^{O(k)}$. In particular, we show that for every fixed $k$, counting equitable $k$-colorings is polynomial-time solvable on graph classes of bounded clique-width, given a clique-width expression.

We then show that, under $\mathsf{SETH}$, the dependence on clique-width in this algorithm is essentially optimal. As a consequence, our results provide a fairly tight picture of the complexity of \textsc{Equitable $k$-Coloring} with respect to the combined parameter $k$+clique-width in the following sense: For variable $k$, the problem is $\mathsf{W[1]}$-hard, however for every fixed integer $k$, it is polynomial-time solvable on graphs of bounded clique-width given a clique-width expression, and this remains true even for the counting version.

Second, we refine our clique-width algorithm for the linear setting. We show that there exists an algorithm, given an integer $k\ge 1$ and an $n$-vertex graph $G$ together with a linear $w$-expression constructing $G$, computes the number of equitable $k$-colorings of $G$ in time $\max\{1,2^k-2\}^w\cdot n^{k+O(1)}$.
Thus, for bounded linear clique-width, we obtain a significantly sharper dependence on the width parameter than in the general clique-width case.

Third, we consider a different structural restriction, namely the class of $P_t$-free graphs.
A graph is called $P_t$-free if it does not contain the path on $t$ vertices as an induced subgraph.
This is a different setting from bounded clique-width; in particular, already $P_5$-free graphs have unbounded clique-width.
Nevertheless, we show that for every $P_t$-free graph $G$, the number of equitable list $3$-colorings of $G$ can be computed in subexponential time.
\end{abstract}

\clearpage

\maketitle

\clearpage

\section{Introduction}
Let $G$ be a graph. A proper $k$-coloring of $G$ is \emph{equitable} if the sizes of any two color classes differ by at most one. Equitable coloring has a long history.
A celebrated result of Hajnal and Szemerédi~\cite{hajnal1970proof} settled a conjecture of Erdős by showing that every graph $G$ with maximum degree at most $k$ admits an equitable $(k+1)$-coloring.
In contrast to the classical setting, where Brooks' theorem states that every connected graph $G$ with maximum degree $\Delta$ satisfies $\chi(G)\le \Delta$, except when $G$ is a complete graph or an odd cycle; the equitable analogue of this statement is still open.
In particular, Chen, Lih, and Wu~\cite{chen1994equitable} conjectured that every connected graph $G$ with maximum degree $\Delta \geq 2$ admits an equitable coloring with $\Delta$ colors, except when $G$ is a complete graph, an odd cycle, or $\Delta$ is odd and $G=K_{\Delta,\Delta}$.
This conjecture remains open in general, although it has been verified for several graph classes~\cite{chen1994equitable, chen1994-treeequitable, chen2009equitable, kostochka2005equitable, zhang1998equitable, de1985some}.

\problemStatement{\textsc{Equitable $k$-Coloring}}{
	Input={A graph $G$ and an integer $k$.},
	Task={Decide whether $G$ admits an equitable $k$-coloring.}
}

\smallskip

On the algorithmic side, the \textsc{Equitable $k$-Coloring} problem has been studied extensively.
By a simple reduction from \textsc{$k$-Coloring}, it follows that \textsc{Equitable $k$-Coloring} is $\NP$-hard.
Bodlaender and Fomin~\cite{bodlaender2005equitable} showed that the problem is solvable in polynomial time on graphs of bounded treewidth, while it remains hard on cographs.
Fellows et al.~\cite{fellows2011complexity} showed that the problem is $\mathsf{W[1]}$-hard when parameterized by treewidth together with the number of colors.
The parameterized complexity of \textsc{Equitable $k$-Coloring} has also been studied under structural parameterizations~\cite{gomes2020structural} and on restricted graph classes~\cite{gomes2018parameterized, cordasco2020iterated}.

In this paper, we first study \textsc{Equitable $k$-Coloring} with respect to the combined parameters clique-width and the number of colors.
The \emph{clique-width} (introduced by Courcelle et al.~\cite{courcelle1993handle}) of a graph $G$, denoted by $\cw(G)$, is the minimum
number of different labels needed to construct $G$ using certain graph operations (see~\Cref{sec:prelim}
for details). Clique-width has been studied extensively both in algorithmic and structural
graph theory. It generalizes the classical notion of \emph{treewidth}, denoted by $\tw$, in the following sense: For every graph $G$, $\cw(G)\leq O(2^{\tw(G)})$~\cite{corneil2005relationship}, whereas no converse bound holds, since there are graphs of arbitrarily large treewidth and bounded clique-width.

The importance of clique-width stems in part from the fact that many problems, including 
\textsc{Coloring} and \textsc{Hamilton Cycle}~\cite{espelage2001solve, kobler2003edge, courcelle2000linear, rao2007msol},
that are $\NP$-hard in general become polynomial-time solvable on any
graph class, say $\mathcal{G}$, of bounded clique-width, that is, for which there exists a constant $c$, such that for every graph $ G \in \mathcal{G}$, $\cw(G)\leq c$.
It follows from the result of~\cite{fellows2011complexity} that \textsc{Equitable $k$-Coloring} remains $\mathsf{W[1]}$-hard when parameterized by the clique-width of the input graph together with the number of colors.
We show, however, that counting equitable $k$-colorings can still be done in $\mathsf{XP}$ for these combined parameters.
In particular, we prove the following.

\begin{theorem}\label{tmh:cw-restate}
There exists an algorithm that, given an integer $k\ge 1$ and an $n$-vertex graph $G$ together with a $w$-expression whose underlying unlabelled graph is $G$, computes the number of equitable $k$-colorings of $G$ in time
$2^{O(k\cdot w)}\cdot n^{O(k)}$. In particular, for every fixed $k$, counting equitable $k$-colorings is polynomial-time solvable on graph classes of bounded clique-width, given a clique-width expression.
\end{theorem}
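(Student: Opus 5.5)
Our plan is a dynamic program over the parse tree of the given $w$-expression $\Phi$. The table records, for each subexpression, how many colors appear on each label, together with the sizes of the color classes.

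For a subexpression $\Psi$ building a labelled graph $G_\Psi$ with labels in $[w]$, and a proper $k$-coloring $c$ of $G_\Psi$, we define the \emph{profile} of $c$ as the pair $(S,\mathbf{s})$. Here $S=(S_1,\dots,S_w)$, where $S_\ell\subseteq[k]$ is the set of colors used on vertices of label $\ell$. The vector $\mathbf{s}=(s_1,\dots,s_k)\in\{0,\dots,n\}^k$ records the color-class sizes. The table entry $T_\Psi[S,\mathbf{s}]$ is the number of proper $k$-colorings of $G_\Psi$ with profile $(S,\mathbf{s})$. There are at most $2^{kw}\cdot(n+1)^k$ profiles. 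At the root we sum $T_\Phi[S,\mathbf{s}]$ over all $S$ and all $\mathbf{s}$ whose entries lie in $\{\lfloor n/k\rfloor,\lceil n/k\rceil\}$; this gives the number of equitable $k$-colorings. Here colorings are maps to $[k]$, matching the paper's counting convention, and empty classes are allowed when $n<k$.

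The updates for the four operations are as follows.
\begin{itemize}
\item For a single vertex with label $\ell$ and color $a$, the profile has $S_\ell=\{a\}$ and $\mathbf{s}=e_a$.
\item For the relabelling $\rho_{i\to j}$, each profile is mapped to the one with $S_j\leftarrow S_i\cup S_j$ and $S_i\leftarrow\emptyset$, and counts are summed. This is correct because the map on colorings is the identity.
\item For the join $\eta_{i,j}$, a coloring of $G_\Psi$ stays proper after the edges are added if and only if $S_i\cap S_j=\emptyset$. So we keep exactly those entries and zero out the rest. Since only edges between labels $i$ and $j$ are added, the label-color sets determine properness.
\item For the disjoint union $\Psi_1\oplus\Psi_2$, colorings of the union correspond bijectively to pairs of colorings. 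Hence
\[T_\Psi[S,\mathbf{s}]=\sum T_{\Psi_1}[S',\mathbf{s}']\cdot T_{\Psi_2}[S'',\mathbf{s}''],\]
where the sum ranges over $S'_\ell\cup S''_\ell=S_\ell$ for all $\ell$ and $\mathbf{s}'+\mathbf{s}''=\mathbf{s}$.
\end{itemize}

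Correctness follows by induction over the expression once we check one invariant: the profile of a coloring after each operation is a function of its profile before, and properness after a join depends only on the profile. The label-set component is what makes this work.

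For the running time, the union step is the bottleneck. Done naively over pairs of profiles it costs $(2^{kw})^2\cdot(n+1)^{2k}=2^{O(kw)}n^{O(k)}$ per node. A $w$-expression for an $n$-vertex graph can be assumed to have $O(n w^2)$ nodes, which is polynomial in $n$ and $w$. Arithmetic is on integers of at most $n\log k$ bits. Altogether this gives $2^{O(k\cdot w)}\cdot n^{O(k)}$, and for fixed $k$ and bounded $w$ it is polynomial.

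The main subtlety, rather than an obstacle, is justifying that the label-color sets suffice and that we need not track which vertices have which colors. We therefore verify that relabellings and unions preserve the invariant and that joins test it exactly.
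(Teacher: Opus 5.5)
Your proposal is correct and follows essentially the same route as the paper. It uses the same signature (the set of colors on each label class together with the color-class size vector), the same leaf, union, join and relabel recurrences, the same root summation over equitable size vectors, and the same quadratic bound $\bigl(2^{kw}(n+1)^k\bigr)^2$ per union node. The one small difference is how you bound the number of nodes: you normalize the expression to $O(nw^2)$ operations, whereas the paper carries the expression size $m$ as a factor and assumes a polynomial-size encoding.
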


The proof of~\Cref{tmh:cw-restate} is by dynamic programming over a given clique-width expression.
Let us distinguish our approach from more standard dynamic programming algorithms over clique-width expressions.
While the overall framework is the usual bottom-up dynamic programming over a given expression, the state representation is different. Ordinary coloring needs only local information about how colors interact with the current label classes.
For equitable coloring, however, one must also keep track of the global sizes of the color classes.
Accordingly, for each subexpression we use a signature that records, for every label, the set of colors appearing on that label class, and, for every color, the current size of its color class.
It is this additional global information that makes it possible to enforce the equitable condition at the root.

We then present a complementary $\mathsf{SETH}$-based lower bound for graphs of bounded clique-width.
Although it does not fully match the running time in~\Cref{tmh:cw-restate}, it shows that for every fixed $k\ge 3$ one cannot hope for a running time of the form $O^\ast((2^k-2-\varepsilon)^{\cw(G)})$.
More precisely, we prove the following.

\begin{theorem}\label{thm:cw-seth-restate}
Let $G$ be a graph.
For every fixed $k\ge 3$ and every $\varepsilon>0$, unless SETH fails, \textsc{Equitable $k$-Coloring} cannot be solved in time
$O^\ast\bigl((2^k-2-\varepsilon)^{\cw(G)}\bigr)$,
where $\cw(G)$ denotes the clique-width of $G$.
\end{theorem}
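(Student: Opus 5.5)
The plan is to reduce from the known SETH-based lower bound for \textsc{$k$-Coloring} parameterized by clique-width. Lampis showed that for every fixed $k\ge 3$ and every $\varepsilon>0$, \textsc{$k$-Coloring} cannot be solved in time $O^\ast\bigl((2^k-2-\varepsilon)^{\cw(G)}\bigr)$ unless SETH fails. This also holds when a linear clique-width expression of the input is given. We therefore need a polynomial-time reduction that takes an instance $G$ of \textsc{$k$-Coloring} (with $n$ vertices, given together with a $w$-expression) to an instance $G'$ of \textsc{Equitable $k$-Coloring} with $\cw(G')\le w+O(1)$. This additive constant disappears in the base: since $(2^k-2-\varepsilon)^{w+c}=O\bigl((2^k-2-\varepsilon)^{w}\bigr)$, a faster algorithm for equitable coloring would contradict Lampis' bound.

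The padding construction is the standard one from the reduction of \textsc{$k$-Coloring} to \textsc{Equitable $k$-Coloring}. Let $G'$ be the disjoint union of $G$ and $kn$ isolated vertices. Every proper $k$-coloring of $G'$ restricts to one of $G$, so equitability is only relevant in the direction ``$G$ colorable $\Rightarrow$ $G'$ equitably colorable''. Suppose $c$ is a proper $k$-coloring of $G$ with class sizes $a_1,\dots,a_k$, where $\sum_i a_i=n$. We give color $i$ to exactly $n-a_i\ge 0$ of the isolated vertices. This uses $\sum_i(n-a_i)=kn-n$ vertices and leaves $n$ isolated vertices. We distribute those $n$ evenly among the colors, so the resulting class sizes are $n+\lfloor n/k\rfloor$ or $n+\lceil n/k\rceil$, which differ by at most one. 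Isolated vertices never create conflicts, so this coloring of $G'$ is proper and equitable.

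It remains to check the width. Each isolated vertex is created with a single label, and the new vertices are combined with the $w$-expression of $G$ by disjoint union without any join operations. Hence $\cw(G')\le\max\{w,1\}$. If the expression is linear, we can simply append the new vertices one by one, which keeps it linear. The instance size grows only to $(k+1)n$, which is polynomial for fixed $k$, so the $O^\ast$ factor is preserved.

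I expect the construction itself to pose no difficulty. The step needing the most care is citing the correct source lower bound: the exact base $2^k-2$, for every $k\ge3$, with the clique-width parameter rather than treewidth. If only a bound for linear clique-width or for a given expression is available, I would note that the reduction preserves the expression type, so the conclusion transfers in that same form.
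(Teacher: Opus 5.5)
Your proposal is correct and follows essentially the same route as the paper: a reduction from Lampis' lower bound for \textsc{$k$-Coloring}, padding $G$ with isolated vertices so that proper $k$-colorings of $G$ extend to equitable ones. The differences are cosmetic. The paper pads with $(k-1)n$ vertices so every class has size exactly $n$, and it introduces a fresh label to get $\cw(G')\le\cw(G)+1$. You pad with $kn$ vertices, and you note that since no joins follow, the padding vertices can reuse an existing label, which gives the slightly sharper $\cw(G')\le\cw(G)$.
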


The proof of~\Cref{thm:cw-seth-restate} is based on the $\mathsf{SETH}$-based lower bound of Lampis~\cite{lampis2020finer} for \textsc{$k$-Coloring} parameterized by clique-width.
We reduce from \textsc{$k$-Coloring} by padding the input graph with an independent set, in such a way that proper $k$-colorings of the original graph correspond exactly to equitable $k$-colorings of the new graph, while the clique-width increases by at most one.

We next refine our~\Cref{tmh:cw-restate} by turning to linear clique-width, which plays a role analogous to that of pathwidth relative to treewidth.
Because linear expressions have a prescribed left-to-right structure (we will get back to this in~\Cref{sec:lcw}), they admit a more economical dynamic-programming state space.
This yields a sharper running time than in the general clique-width setting.

\begin{theorem}\label{thm:lcw-counting-restate}
There exists an algorithm that, given an integer $k\ge 1$ and an $n$-vertex graph $G$ together with a linear $w$-expression constructing $G$, computes the number of equitable $k$-colorings of $G$ in time $\max\{1,2^k-2\}^w\cdot n^{k+O(1)}$.
In particular, for every fixed integer $k$, counting equitable $k$-colorings is polynomial-time solvable on graph classes of bounded linear clique-width, given a linear clique-width expression.
\end{theorem}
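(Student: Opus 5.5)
We do dynamic programming along the linear $w$-expression. Such an expression is a sequence of operations. Each operation either creates a new vertex with some label $i\in[w]$ (and, in the linear setting, joins it to the current graph only through subsequent $\eta_{i,j}$ operations), adds all edges between label classes $i$ and $j$ ($\eta_{i,j}$), or relabels $i$ to $j$ ($\rho_{i\to j}$).

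For a partial proper $k$-coloring of the current graph $G_t$ we record a signature $(S_1,\dots,S_w;\, n_1,\dots,n_k)$. Here $S_i\subseteq[k]$ is the set of colors used on label class $i$, and $n_c$ is the size of color class $c$. The table $T_t$ stores, for each signature, the number of proper $k$-colorings of $G_t$ with that signature.

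The key observation that gives the base $2^k-2$ rather than $2^k$ concerns labels on which a future join $\eta_{i,j}$ still happens. If such a label carried all $k$ colors, or carried a color set whose complement must meet the other side's set, the join is only valid when $S_i\cap S_j=\emptyset$.

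We refine the state space as follows. A label class with $S_i=\emptyset$ is treated as "dead or empty". A label class whose vertices will never again receive new edges (a dead label, detectable by scanning the rest of the expression) can be merged into a single garbage label carrying no color information. For a live label, $S_i=[k]$ is useless: any later $\eta_{i,j}$ with $S_j\ne\emptyset$ kills the coloring. We can therefore argue, by a preprocessing/normalization of the linear expression, that every live label that will participate in a join either has $S_i$ a nonempty proper subset of $[k]$ or is currently empty. This leaves at most $2^k-2$ informative values per live label, plus one "empty" value that is determined by the expression itself (whether any vertex currently carries label $i$ is independent of the coloring). Hence the number of label profiles is at most $\max\{1,2^k-2\}^w$. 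The color-count vector contributes at most $(n+1)^k$ states, giving $n^{k}$ up to polynomial factors.

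The transitions are routine. Vertex creation with label $i$ branches over $k$ colors, updating $S_i$ and $n_c$. The join $\eta_{i,j}$ discards the states with $S_i\cap S_j\ne\emptyset$. The relabel $\rho_{i\to j}$ sets $S_j\leftarrow S_i\cup S_j$ and $S_i\leftarrow\emptyset$, and prunes any live label that becomes $[k]$, since it can no longer join a nonempty class. Each operation costs time linear in the table size, times $\mathrm{poly}(n)$, and there are $\mathrm{poly}(n,w)$ operations.

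At the end we sum $T$ over all signatures with $\{n_1,\dots,n_k\}\subseteq\{\lfloor n/k\rfloor,\lceil n/k\rceil\}$. Correctness follows by the standard induction: the signature determines exactly which future edges are violated.

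The main obstacle is justifying the $2^k-2$ count, specifically excluding the full set $[k]$ for labels that still matter. A live label with $S_i=[k]$ that is only ever joined to labels which stay empty would be a problem. I would handle this by first normalizing the expression: remove joins between label classes one of which is empty at that time, and declare a label dead once no later join involves it with a nonempty partner. After this normalization, a live label with all $k$ colors has no valid extension and can be pruned. The value $\emptyset$ for an occupied label is impossible, so the per-label count is indeed at most $2^k-2$. For $k=1$ the bound $\max\{1,\cdot\}$ covers the degenerate case.
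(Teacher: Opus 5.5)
Your proposal is correct and follows the same route as the paper. The paper also runs a left-to-right DP whose states are the color-class sizes together with, for each \emph{live} label, a nonempty proper subset of $[k]$; here a label is live if it is nonempty and, following its relabel trajectory, is later joined to a nonempty class. Non-live labels carry no information, and full color sets are pruned because such partial colorings cannot extend. Your ``normalization'' corresponds to the paper's liveness predicate, computed via the label-evolution maps $\tau_{t,s}$. The paper also proves a point you leave implicit: a nonempty untracked label never becomes live, and a relabel into a live label only merges classes that were themselves live or empty. This is what makes your transitions such as $S_j\leftarrow S_i\cup S_j$ well defined.
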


We finally turn to $P_t$-free graphs. Since graphs excluding a fixed induced subgraph form one of the most natural classes in structural graph theory, $P_t$-free graphs provide a particularly appealing test case: they are far beyond bounded treewidth, and even already $P_5$-free graphs have unbounded clique-width~\cite{dabrowski2016clique}. 

For ordinary \textsc{$k$-Coloring} (which asks whether a given graph is $k$-colorable) on $P_t$-free graphs, polynomial-time algorithms are known for $t\leq 5$~\cite{hoang2010deciding}, for $(k,t)=(4,6)$~\cite{chudnovsky2019four}, and for $(k,t)=(3,7)$~\cite{bonomo2018three}, while Huang~\cite{huang2016improved} showed that \textsc{$4$-Coloring} is $\NP$-complete for $P_7$-free graphs and \textsc{$5$-Coloring} is $\NP$-complete for $P_6$-free graphs; in particular, the complexity of \textsc{$3$-Coloring} on $P_t$-free graphs remains open for $t\geq 8$.
This makes $P_t$-free graphs a particularly natural setting in which to seek subexponential-time algorithms for equitable coloring.
Groenland et al.~\cite{groenland2019h} showed that the number of proper $3$-colorings of a $P_t$-free graph can be computed in subexponential time.
We extend this result to equitable colorings.
In particular, we prove the following.

\begin{theorem}\label{thm:PT-restate}
Let $t\in\mathbb{N}$.
There is an algorithm that, given a $P_t$-free graph $G$ on $n$ vertices and a list assignment
$L\colon V(G)\to 2^{\{1,2,3\}}$, computes the number of equitable proper list $3$-colorings of $(G,L)$ in time
$2^{O(\sqrt{n\log n})}\cdot \mathrm{poly}(n)$.
\end{theorem}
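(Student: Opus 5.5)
We follow the strategy of Groenland et al.~\cite{groenland2019h} for counting list $3$-colorings of $P_t$-free graphs, and augment every subproblem so that it also tracks the sizes of the three color classes. Concretely, instead of a single count we compute the generating polynomial
\[
Z(G,L;x_1,x_2,x_3)=\sum_{c}\; x_1^{|c^{-1}(1)|}x_2^{|c^{-1}(2)|}x_3^{|c^{-1}(3)|},
\]
where $c$ ranges over proper list $3$-colorings of $(G,L)$. The number of equitable colorings is then the sum of the coefficients of those monomials $x_1^{a_1}x_2^{a_2}x_3^{a_3}$ with $a_1+a_2+a_3=n$ and $\max_i a_i-\min_i a_i\le 1$. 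Since there are only $O(n^2)$ monomials, each polynomial is a table of polynomially many integers.

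The recursion of Groenland et al.\ has two kinds of steps. In a \emph{branching} step, a high-degree vertex $v$ (degree at least $\sqrt{n\log n}$) is either colored, which shrinks the lists of its neighbours, or has a color removed from its list. The number of recursion leaves is bounded by $2^{O(\sqrt{n\log n})}$ by a measure argument on the total list size. When a vertex receives a color $i$ during branching, we multiply by $x_i$. When a vertex's list becomes a singleton, we color it, propagate, and again multiply by the corresponding variable. When a list becomes empty, the subproblem contributes $0$. Branches are disjoint and exhaustive, so their polynomials add.

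At the leaves the maximum degree is small. Here we use the Gyárfás path argument: a $P_t$-free graph with small maximum degree has a balanced separator consisting of at most $t$ closed neighbourhoods, of total size $O(t\sqrt{n\log n})$. We enumerate all list colorings of the separator, $3^{O(\sqrt{n\log n})}$ of them, and record each as a monomial. The remaining graph splits into components of size at most $n/2$, which we solve recursively. Because the polynomial of a disjoint union is the product of the polynomials of its components, component results are combined by polynomial multiplication. Each multiplication truncated to degree at most $n$ costs $\mathrm{poly}(n)$. Vertices with lists of size at most $2$ that remain can be handled as in the original (via $2$-SAT-like structure in the decision setting). For counting, however, Groenland et al.\ instead keep branching until all remaining lists have size $3$ or components become small, and we follow their exact case structure, only replacing integer counts by polynomials.

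The main obstacle is checking that every step of their counting algorithm is \emph{additive over disjoint cases} and \emph{multiplicative over disjoint components}, with no division or inclusion–exclusion that would break the bookkeeping of class sizes. If inclusion–exclusion does appear, it still works, because subtraction is valid in $\mathbb{Z}[x_1,x_2,x_3]$. The running-time recurrence is unchanged up to a $\mathrm{poly}(n)$ factor per node. This gives $2^{O(\sqrt{n\log n})}\cdot\mathrm{poly}(n)$ overall, as claimed in \Cref{thm:PT-restate}.
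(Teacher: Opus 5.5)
Your top-level strategy is the paper's: compute the generating polynomial of list $3$-colorings and add up the few coefficients whose exponent vectors are equitable. The gap is in how you obtain that polynomial. You try to re-derive the algorithm of Groenland et al., and the leaf phase you describe does not run in the claimed time.

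Suppose each low-degree instance is handled by taking a Gyárfás-path separator $N[P]$ of size $s=O(t\sqrt{n\log n})$, enumerating its $3^{s}$ list colorings, and recursing into the components. Then every component must be re-solved once per separator coloring, because its lists depend on that coloring. The degree bound, and hence $s$, does not shrink in the subinstances. So the recurrence $T(m)\le 3^{s}\sum_i T(m_i)$ with $m_i\le m/2$ runs to depth about $\tfrac12\log n$ before components drop below size $s$. This gives $2^{O(s\log n)}=2^{O(t\sqrt n\,\log^{3/2}n)}$. Even after rebalancing the degree threshold $\Delta$ against the $2^{O((n/\Delta)\log n)}$ leaves of the branching phase, you only get $2^{O(\sqrt{n}\log n)}$.

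Avoiding exactly this extra logarithm is why Groenland et al.\ prove that $P_t$-free graphs of maximum degree $\Delta$ have pathwidth $O(t\Delta)$. They then finish with a dynamic program over a path decomposition, which carries polynomial-valued entries without trouble. So your claim that ``the running-time recurrence is unchanged'' does not hold for the algorithm you wrote. Your remarks about lists of size at most $2$ and ``following their exact case structure'' also leave the algorithm unspecified exactly where you locate the main obstacle. A smaller point: in the branching step you must color $v$ with some $h\in L(v)$ that lies in the lists of many neighbours with list size at least $2$. High degree alone does not make the measure drop.

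None of this needs to be redone. The result of Groenland et al.\ is already stated for the generating function $p_{(G,L)\to H}$ of list homomorphisms to any multigraph $H$ in which two distinct vertices have at most one common neighbour, and $K_3$ satisfies this. The paper invokes that theorem with $H=K_3$ for $t\ge 4$; smaller $t$ are trivial or can be treated as $t=4$. This computes $p_{(G,L)\to K_3}$ in time $2^{O(\sqrt{n\log n})}\cdot\mathrm{poly}(n)$. The paper then reads off the at most three relevant coefficients among its $O(n^2)$ monomials. With that citation your ``main obstacle'' disappears, and the rest of your argument goes through.
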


The remainder of the paper is organized as follows.
In~\Cref{sec:cw&seth} we prove~\Cref{tmh:cw-restate} and~\Cref{thm:cw-seth-restate}.
In~\Cref{sec:lcw} we prove~\Cref{thm:lcw-counting-restate}.
Finally, in~\Cref{sec:pt} we prove~\Cref{thm:PT-restate}.

\section{Preliminaries}\label{sec:prelim}

\paragraph*{Sets and numbers} We use $\mathbb{N}$ to denote the set of positive integers. We let $[n] = \{1,\dots, n\}$ for every $n\in \mathbb{N}$.
For a set $U$, we denote by $2^{U}$ the set of all subsets of $U$. 
For $S \subseteq U$, we let $\overline{S} = U \setminus S$. For $k \in \mathbb{N}$, by
a \textit{$k$-subset} of $U$ we mean a subset of $U$ with $k$ elements. The set of all $k$-subsets of $U$ is denoted by $U^{(k)}$. The identity map on $U$ is denoted by  $\mathrm{id}_{U}$.
We write $O^\ast(f(n))$ for a non-decreasing function $f(n)$ of the input size parameter $n$ to suppress polynomial factors in $n$. For two sets $A$ and $B$ we denote by $A \uplus B$ the disjoint union of $A$ and $B$. Let $f:D\to R$ be a function and let $A\subseteq D$.
The \emph{restriction of $f$ to $A$}, denoted by $f|_{A}$, is the function $f|_{A}:A\to R$ defined by
$f|_{A}(x)\coloneqq f(x)$ for all $x\in A$.
For composable maps $f$ and $g$, $g\circ f$ denotes their composition.

\paragraph*{Graphs}
Graphs in this paper have finite vertex sets and no loops or parallel edges. Let $G = (V,E)$ be a graph. A \textit{clique} in $G$ is a set of pairwise adjacent vertices. A \textit{stable set} or \textit{independent set} in $G$ is a set of pairwise non-adjacent vertices.
For $X \subseteq V(G)$, we denote the subgraph of $G$ induced by $X$ as $G[X]$, that is 
$G[X] = (X, \{uv \colon u, v \in X \mbox{ and } uv \in E \})$.

We denote by $N(X)$ vertices of $G\setminus X$ with a neighbor in $X$, and $N[X] = N(X)\cup X$. For disjoint $X, Y \subseteq V(G)$, we say that $X$ is \textit{complete} to $Y$ if every vertex in $X$ is adjacent to every vertex in $Y$, and $X$ is \textit{anticomplete} to $Y$ if there are no edges between $X$ and $Y$.

For given graphs $G$ and $H$, we say that $G$ is \emph{$H$-free} if $G$ does not contain $H$ as an induced subgraph. We let $P_n$, $C_n$, and $K_n$ denote the chordless path, chordless cycle, and the complete graph on $n$ vertices.

A \textit{coloring} of $G$ is a mapping $\varphi \colon V\rightarrow \{1, 2, \ldots\}$ that gives each vertex $u \in V$ a \textit{color} $\varphi(u)$ in such a way that, for every two adjacent vertices $u$ and $v$ in $G$, we have that $\varphi(u) \neq \varphi(v)$. For $k \geq 1$, a coloring $\varphi$ of $G$ is a \textit{$k$-coloring} if $\varphi(u) \in \{1, \ldots, k\}$ for every $u \in V$, and a graph is \textit{$k$-colorable} if it admits a $k$-coloring.
The \textit{chromatic number}~$\chi(G)$ is the smallest $k$ for which $G$ is $k$-colorable.
A $k$-coloring of $G$ is \textit{equitable} if $||\varphi^{-1}(i)|-|\varphi^{-1}(j)||\le 1$ for every $i,j\in [k]$. Equivalently, $G$ is equitably $k$-colorable if it is $k$-colorable and every color is used either $\lfloor n/k \rfloor$ or $\lceil n/k \rceil$ times where $n=|V(G)|$. The \textit{equitable chromatic number}~$\eqcol{G}$ is the smallest $k$ for which $G$ is equitably $k$-colorable.

\paragraph*{Clique-width and linear clique-width}
For a fixed $w\ge 1$, a $w$-labelled graph is a graph $G$ together with a labelling function $\lab:V(G)\to [w]$.
Equivalently, it may be described by the partition
$V(G)=C_1\uplus\cdots\uplus C_w,$
where $C_i=\lab^{-1}(i)$ is the set of vertices of label $i$.
On $w$-labelled graphs we consider the following operations:
\begin{itemize}
    \item for each $i\in [w]$, the introduce-operation $i(v)$, which constructs a single-vertex graph whose unique vertex $v$ has label $i$;
    \item the union-operation $\union(\cdot,\cdot)$, which forms the disjoint union of two $w$-labelled graphs;
    \item for distinct $i,j\in [w]$, the relabel-operation $\ren_{i\to j}(\cdot)$, which changes the label of every vertex of label $i$ to $j$;
    \item for distinct $i,j\in [w]$, the join-operation $\join_{i,j}(\cdot)$, which adds all edges with one endpoint of label $i$ and the other of label $j$.
\end{itemize}

A \emph{$w$-expression} is any valid term built from these operations.
If $\mu$ is a $w$-expression, we write $G_\mu$ for the $w$-labelled graph constructed by $\mu$, and $\lab_\mu$ for its
labelling function.
The \emph{clique-width} of an unlabelled graph $G$, denoted by $\cw(G)$, is the least integer $w$ such that there exists a
$w$-expression whose underlying unlabelled graph is $G$.
Associated with every $w$-expression $\mu$ is its syntax tree $T_\mu$ in the natural way.
For any node $t\in V(T_\mu)$, the subtree rooted at $t$ induces a subexpression $\mu_t$.
Whenever $\mu$ is fixed, we write
$G_t\coloneqq G_{\mu_t},
V_t\coloneqq V(G_t),
E_t\coloneqq E(G_t),
\lab_t\coloneqq \lab_{\mu_t}.$
We use this notation to refer conveniently to the labelled graph associated with a node of the syntax tree of $\mu$.

A clique-expression $\mu$ is \emph{linear} if in every union-operation, the second operand is a single-vertex
$w$-labelled graph.
The \emph{linear clique-width} of an unlabelled graph $G$, denoted by $\lcw(G)$, is the least integer $w$ such that
$G$ is the underlying unlabelled graph of some linear $w$-expression.

\paragraph*{Complexity} For basics of parameterized complexity, see~\cite{cygan2015parameterized}.

The \emph{Strong Exponential-Time Hypothesis} ($\mathsf{SETH}$)~\cite{impagliazzo2001problems, calabro2009complexity} is a standard complexity assumption that is often used to rule out algorithms with substantially improved exponential dependence. It concerns the complexity of \textsc{$k$-Satisfiability}, where every clause contains at most $k$ literals. More precisely, for every $k\ge 3$, let
\[
c_k\coloneqq \inf \bigl\{ \delta : k\text{-\textsc{Satisfiability} can be solved in time } O(2^{\delta n}) \bigr\},
\]
where $n$ denotes the number of variables in the input formula.
Then, the Strong Exponential-Time Hypothesis states that
$\lim_{k\to\infty} c_k = 1.$
\section{Graphs of bounded Clique-width: The Proofs of~\Cref{tmh:cw-restate} and~\Cref{thm:cw-seth-restate}}\label{sec:cw&seth}

Bodlaender and Fomin~\cite{bodlaender2005equitable} showed that \textsc{Equitable $k$-Coloring} can be solved in
polynomial time on graphs of bounded treewidth. In this section we show that, for every fixed integer $k$, given an $n$-vertex graph $G$ together with a $w$-expression 
whose underlying unlabelled graph is $G$, one can compute the number of equitable $k$-colorings of $G$ in time
$2^{O(k\cdot w)}\cdot n^{O(k)}$.
In particular, this yields a polynomial-time algorithm on graph classes of bounded clique-width.

We will use the clique-width operations introduced in~\Cref{sec:prelim}. In particular, a $w$-expression is a term
whose leaves are single-vertex $w$-labelled graphs and whose internal nodes apply one of the operations
$\union(\cdot,\cdot)$, $\join_{i,j}(\cdot)$ ($i\neq j$), and $\ren_{i\to j}(\cdot)$ ($i\neq j$). We first introduce the signature notation used by the dynamic programming algorithm.

\medskip

\begin{definition}[$X$-signature]\label{def:X-signature}
Let $k,w\in\mathbb{N}$.
Let $X$ be a subexpression of a given $w$-expression, and let $G_X$ be the $w$-labelled graph constructed by $X$, with vertex set $V_X$ and labelling function
$\lab_X:V_X\to[w]$.
An \emph{$X$-signature} is a pair $\sig=(\mathbf{S},\mathbf{a})$ where $\mathbf{S}=(S_1,\dots,S_w)$ with
$S_\ell\subseteq [k]$ for all $\ell\in[w]$, and $\mathbf{a}=(a_1,\dots,a_k)\in\{0,1,\dots,|V_X|\}^k$.
\end{definition}

We say that a proper $k$-coloring $\varphi:V_X\to[k]$ \emph{is represented by} $\sig=(\mathbf{S},\mathbf{a})$ if
\begin{itemize}
  \item $S_\ell=\{\,c\in[k]:\exists v\in V_X \text{ with }\lab_X(v)=\ell \text{ and }\varphi(v)=c\,\}$ for all
        $\ell\in[w]$, and
  \item $a_c=|\varphi^{-1}(c)|$ for all $c\in[k]$.
\end{itemize}

The key point is that this signature stores precisely the information needed by the clique-width operations:
for join-operations it is enough to know which colors appear on each label class, while for equitable coloring
one must additionally know the global sizes of the color classes.

\begin{lemma}\label{lem:root-condition-cw}
 Let $n=kq+r$ with
$q=\lfloor n/k\rfloor$ and $0\le r<k$.
If $X^\star$ is a $w$-expression for $G$, then $G$ admits an equitable $k$-coloring if and only if there exists an $X^\star$-signature
$\sig=(\mathbf{S},\mathbf{a})$ represented by a proper $k$-coloring of $G$ such that
$a_c\in\{q,q+1\}$ for every $c\in[k]$, and $\left|\{c\in[k]:a_c=q+1\}\right|=r$.
\end{lemma}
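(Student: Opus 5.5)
The plan is to prove both directions by unfolding the definitions, with the central point being that an equitable $k$-coloring of an $n$-vertex graph has color-class sizes forced to be exactly $q$ or $q+1$, with exactly $r$ classes of size $q+1$. Throughout, let $G_{X^\star}$ have vertex set $V(G)$, so $|V_{X^\star}|=n$. For any proper $k$-coloring $\varphi$ of $G$ there is exactly one $X^\star$-signature representing it: set $S_\ell$ to be the set of colors appearing on label class $\ell$, and $a_c=|\varphi^{-1}(c)|\in\{0,\dots,n\}$. This is well defined by the definition of representation.

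For the forward direction, take an equitable $k$-coloring $\varphi$ and its signature $(\mathbf S,\mathbf a)$. Let $m=\min_c a_c$. Equitability gives $a_c\in\{m,m+1\}$ for all $c$. Let $s$ be the number of colors with $a_c=m+1$, where $0\le s<k$ (if all had $m+1$, the minimum would be $m+1$). Then $n=\sum_c a_c=km+s$ with $0\le s<k$. By uniqueness of division with remainder, $m=q$ and $s=r$, which is exactly the stated condition on $\mathbf a$.

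For the backward direction, suppose $(\mathbf S,\mathbf a)$ is represented by a proper $k$-coloring $\varphi$ of $G$ satisfying the condition. Then $|\varphi^{-1}(c)|=a_c\in\{q,q+1\}$ for all $c$, so any two class sizes differ by at most one, and $\varphi$ is equitable. The cardinality condition on $r$ is not even needed here; it holds automatically. I expect no real obstacle. The only slightly careful step is the division-with-remainder argument, which pins down $m=q$ and $s=r$, including the edge case $n<k$, where $q=0$ and some colors are unused.
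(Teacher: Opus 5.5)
Your proof is correct and follows essentially the same route as the paper: both pass to the color-class size vector, take the minimum class size $m$, observe that the number of classes of size $m+1$ is strictly less than $k$, and invoke uniqueness of Euclidean division to force $m=q$ and exactly $r$ classes of size $q+1$. Your extra remark that the backward direction needs only $a_c\in\{q,q+1\}$ (the count condition then holds automatically) is a harmless refinement of the same argument.
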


\begin{proof}
Let $\varphi$ be a proper $k$-coloring of $G$, and let
$\mathbf a=(a_1,\dots,a_k)$ be its color-class size vector, where
$a_c\coloneqq |\varphi^{-1}(c)|$ for each $c\in[k]$.
Then $\varphi$ is equitable if and only if all entries of $\mathbf a$ differ by at most $1$. Equivalently, if and only if, with
$q'\coloneqq \min_{c\in[k]} a_c,$
we have $a_c\in\{q',q'+1\}$ for all $c\in[k]$.
In that case,
\[
n=\sum_{c=1}^k a_c
  = kq' + \left|\{c\in[k]:a_c=q'+1\}\right|.
\]
Since $q'=\min_{c\in[k]} a_c$, there exists at least one color $c$ with $a_c=q'$, and therefore
$0\le \left|\{c\in[k]:a_c=q'+1\}\right|<k.$
This is the Euclidean division of $n$ by $k$.
By uniqueness of Euclidean division, $q'=q$ and
$\left|\{c\in[k]:a_c=q+1\}\right|=r.$
Therefore, $G$ admits an equitable $k$-coloring if and only if there exists a proper $k$-coloring of $G$
whose size vector $\mathbf a$ satisfies $a_c\in\{q,q+1\}$ for all $c\in[k]$, and exactly $r$ coordinates are equal
to $q+1$.
Since an $X^\star$-signature represented by $\varphi$ records exactly the vector $\mathbf a$, the claim follows.
\end{proof}

\begin{definition}\label{def:sig-map-relabel}
Let $X=\ren_{i\to j}(Y)$ with $i\neq j$. For a $Y$-signature $\sig=(\mathbf{S},\mathbf{a})$ and an $X$-signature
$\sig'=(\mathbf{S}',\mathbf{a}')$, we write $\sig\mapsto_{i\to j}\sig'$ if $\mathbf{a}'=\mathbf{a}$ and
\[
S'_\ell=
\begin{cases}
S_\ell & \text{if }\ell\notin\{i,j\},\\
\emptyset & \text{if }\ell=i,\\
S_i\cup S_j & \text{if }\ell=j.
\end{cases}
\]
\end{definition}

We can now prove the main result of this section.

\begin{theorem}\label{thm:cw-counting}
Fix an integer $k\ge 1$.
There exists an algorithm that, given an $n$-vertex graph $G$ together with a $w$-expression $\mathcal{E}$ of size
$m$ whose underlying unlabelled graph is $G$, computes the number of equitable $k$-colorings of $G$ in time
$2^{O(k\cdot w)}\cdot m\cdot n^{O(k)}$
(up to polynomial factors for arithmetic on integers of size at most $k^n$).
In particular, for standard polynomial-size encodings of clique-width expressions, the running time is
$2^{O(k\cdot w)}\cdot n^{O(k)}$.
\end{theorem}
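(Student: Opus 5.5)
We will run a bottom-up dynamic program over the syntax tree of $\mathcal{E}$. For every node $t$ and every $X$-signature $\sig=(\mathbf S,\mathbf a)$ with $X=\mu_t$, we compute
\[
N_t(\sig)\coloneqq\bigl|\{\varphi:V_t\to[k]\ \text{proper}:\ \varphi \text{ is represented by } \sig\}\bigr|.
\]
Every proper coloring is represented by exactly one signature, so these counts partition the set of proper colorings of $G_t$. By \Cref{lem:root-condition-cw} and its proof, a proper coloring of $G$ is equitable exactly when its size vector satisfies $a_c\in\{q,q+1\}$ for all $c$ and has exactly $r$ entries equal to $q+1$. The answer is therefore the sum of $N_{\mathrm{root}}(\sig)$ over all root signatures whose vector $\mathbf a$ has this form. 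The number of signatures per node is at most $2^{kw}\cdot(n+1)^k$, which already gives the claimed state-space size.

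The recurrences, case by case, are as follows.
\begin{itemize}
\item \emph{Introduce $i(v)$:} $N=1$ exactly for the $k$ signatures with $S_i=\{c\}$, $S_\ell=\emptyset$ for $\ell\ne i$, and $\mathbf a=e_c$. All other signatures get $N=0$.
\item \emph{Relabel $X=\ren_{i\to j}(Y)$:} $N_X(\sig')=\sum_{\sig\mapsto_{i\to j}\sig'}N_Y(\sig)$, using \Cref{def:sig-map-relabel}. This is correct because $G_X=G_Y$ as unlabelled graphs, so the proper colorings coincide, and the represented signature transforms deterministically by $\mapsto_{i\to j}$. Computing it means pushing each $Y$-signature forward once.
\item \emph{Join $X=\join_{i,j}(Y)$:} $G_X$ adds all edges between labels $i$ and $j$. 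A proper coloring of $G_Y$ stays proper in $G_X$ iff no color appears on both classes, i.e.\ $S_i\cap S_j=\emptyset$. The signature is unchanged, so $N_X(\sig)=N_Y(\sig)$ if $S_i\cap S_j=\emptyset$ and $0$ otherwise.
\item \emph{Union $X=\union(Y,Z)$:} a coloring of $V_Y\uplus V_Z$ is proper iff its two restrictions are proper, since no edges cross. Its signature is $(S^Y_\ell\cup S^Z_\ell)_\ell$ together with $\mathbf a^Y+\mathbf a^Z$. Hence
\[
N_X(\mathbf S,\mathbf a)=\sum N_Y(\mathbf S^Y,\mathbf a^Y)\,N_Z(\mathbf S^Z,\mathbf a^Z),
\]
where the sum runs over pairs with $S^Y_\ell\cup S^Z_\ell=S_\ell$ for every $\ell$ and $\mathbf a^Y+\mathbf a^Z=\mathbf a$.
\end{itemize}
Correctness of each case follows by induction on the tree, via the bijection between colorings of $G_X$ and pairs (or single) colorings of the children with the stated signature relations.

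The main cost, and the only step needing care, is the union node. Enumerating all pairs of child signatures naively costs $(2^{kw}(n+1)^k)^2=2^{O(kw)}n^{O(k)}$, which is still within the bound. So we simply iterate over all pairs $(\sig^Y,\sig^Z)$ and add the product to the entry of the resulting $X$-signature. The counts are at most $k^n$, so arithmetic costs $\mathrm{poly}(n,\log k)$ per operation. The other node types cost $O(2^{kw}(n+1)^k\cdot\mathrm{poly}(k,w))$ each. Summing over the $m$ nodes gives $2^{O(kw)}\cdot m\cdot n^{O(k)}$ up to arithmetic factors.

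For a polynomial-size expression, $m=\mathrm{poly}(n)$ is absorbed into $n^{O(k)}$. A further polynomial in $w$ would also be absorbed into $2^{O(kw)}$, but no such factor arises here, so this absorption is not needed. One should also note that only $\mathbf a$ with $\sum_c a_c=|V_X|$ are nonzero, but this refinement is not needed for the bound.
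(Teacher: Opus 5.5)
Your proposal is correct and matches the paper's proof essentially step for step. It uses the same signatures $(\mathbf S,\mathbf a)$, the same leaf/union/join/relabel recurrences, the same root extraction via \Cref{lem:root-condition-cw}, and the same quadratic pairwise enumeration at union nodes. The only difference is cosmetic: you process relabel nodes by pushing each child signature forward, whereas the paper enumerates the at most $3^{|S'_j|}$ preimages of each target signature; both give linear time in the table size.
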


\begin{proof}
Let $\mathcal{E}$ be the given $w$-expression, and let $X^\star\coloneqq \mathcal{E}$.
For every subexpression $X$ of $\mathcal{E}$, we denote by $G_X$ the $w$-labelled graph constructed by $X$,
with vertex set $V_X$ and labelling function $\lab_X:V_X\to[w]$.
For an $X$-signature $\sig=(\mathbf{S},\mathbf{a})$, we write $\mathbf{S}=(S_1,\dots,S_w)$ and
$\mathbf{a}=(a_1,\dots,a_k)$.
Recall that a proper $k$-coloring $\varphi:V_X\to[k]$ is \emph{represented by} $\sig$ if
\begin{itemize}
  \item $S_\ell=\{\,c\in[k]:\exists v\in V_X \text{ with }\lab_X(v)=\ell \text{ and }\varphi(v)=c\,\}$ for all
        $\ell\in[w]$, and
  \item $a_c=|\varphi^{-1}(c)|$ for all $c\in[k]$.
\end{itemize}

For every subexpression $X$ and every $X$-signature $\sig$, define
\[
\mathrm{count}[X,\sig]\in\mathbb{Z}_{\ge 0}
\]
to be the number of proper $k$-colorings $\varphi$ of $G_X$ that are represented by $\sig$.
Equivalently,
\[
\mathrm{count}[X,\sig]
=
\bigl|\{\varphi : \varphi \text{ is a proper $k$-coloring of }G_X \text{ and is represented by }\sig\}\bigr|.
\]
We compute all values $\mathrm{count}[X,\sig]$ by dynamic programming over the subexpressions $X$ of $\mathcal{E}$,
proceeding from smaller subexpressions to larger ones.
We show, by induction on the structure of $X$, that for every subexpression $X$ of $\mathcal{E}$ and every
$X$-signature $\sig$, the value $\mathrm{count}[X,\sig]$ equals the number of proper $k$-colorings of $G_X$
represented by $\sig$.
The base case is when $X$ is a leaf.
For the induction step, we assume the claim holds for the immediate proper subexpressions of $X$, and prove it for $X$ according to whether the last operation of $X$ is a union, join, or relabel operation.

\medskip
\noindent\textbf{Leaf.}
Suppose that $X$ is a leaf subexpression creating a single vertex $v$ with label $\ell\in[w]$.
For $c\in[k]$, let $\mathbf e_c\in\{0,1\}^k$ be the vector whose $c$th coordinate is $1$ and whose other coordinates are $0$.
For each color $c\in[k]$, define the signature $\sig^{(\ell,c)}=(\mathbf{S},\mathbf{a})$ by
$S_\ell=\{c\}$, $S_{\ell'}=\emptyset$ for all $\ell'\in[w]\setminus\{\ell\}$, and $\mathbf a=\mathbf e_c$.
We set
\[
\mathrm{count}[X,\sig]
\coloneqq 
\begin{cases}
1 & \text{if }\sig=\sig^{(\ell,c)}\text{ for some }c\in[k],\\
0 & \text{otherwise.}
\end{cases}
\]
This is correct because $G_X$ has a single vertex and no edges, so for each $c\in[k]$ there is exactly one proper
coloring with $\varphi(v)=c$, and it represents exactly $\sig^{(\ell,c)}$; no other signature can be represented.

\medskip
\noindent\textbf{Union.}
Assume $X=\union(Y,Z)$. By the semantics of $\union$, the graphs $G_Y$ and $G_Z$ are vertex-disjoint induced
subgraphs of $G_X$, there are no edges between $V_Y$ and $V_Z$, and
\begin{equation}\label{eq:union-structure}
V_X=V_Y\uplus V_Z, \quad E(G_X)=E(G_Y)\uplus E(G_Z), \quad
\lab_X|_{V_Y}=\lab_Y, \lab_X|_{V_Z}=\lab_Z.
\end{equation}

Fix an $X$-signature $\sig=(\mathbf{S},\mathbf{a})$.
We define $\mathrm{count}[X,\sig]$ by
\begin{equation}\label{eq:count-union-rigorous}
\mathrm{count}[X,\sig]
\coloneqq 
\sum_{\mathbf{a}^Y+\mathbf{a}^Z=\mathbf{a}}
\ \sum_{\substack{\mathbf{S}^Y,\mathbf{S}^Z:\\
S_\ell=S_\ell^Y\cup S_\ell^Z\ \forall \ell\in[w]}}
\mathrm{count}[Y,(\mathbf{S}^Y,\mathbf{a}^Y)]\cdot
\mathrm{count}[Z,(\mathbf{S}^Z,\mathbf{a}^Z)].
\end{equation}

Let us justify \eqref{eq:count-union-rigorous}.
Let $\varphi$ be any proper $k$-coloring of $G_X$.
Define $\varphi_Y\coloneqq \varphi|_{V_Y}$ and $\varphi_Z\coloneqq \varphi|_{V_Z}$.
Then $\varphi_Y$ and $\varphi_Z$ are proper colorings of $G_Y$ and $G_Z$, respectively, because
$G_Y$ and $G_Z$ are induced subgraphs of $G_X$ by \eqref{eq:union-structure}.
Conversely, for any pair $(\psi_Y,\psi_Z)$ where $\psi_Y$ is a proper coloring of $G_Y$ and $\psi_Z$ is a proper
coloring of $G_Z$, there is a unique coloring $\psi$ of $G_X$ obtained by combining them:
\[
\psi(v)=
\begin{cases}
\psi_Y(v) & v\in V_Y,\\
\psi_Z(v) & v\in V_Z,
\end{cases}
\]
and $\psi$ is proper since there are no edges between $V_Y$ and $V_Z$.
Now let $\sig_Y=(\mathbf{S}^Y,\mathbf{a}^Y)$ and $\sig_Z=(\mathbf{S}^Z,\mathbf{a}^Z)$ be the signatures represented
by $\varphi_Y$ and $\varphi_Z$, respectively.
We claim that $\varphi$ is represented by $\sig=(\mathbf{S},\mathbf{a})$ if and only if
\begin{equation}\label{eq:union-sig-cond}
\mathbf{a}=\mathbf{a}^Y+\mathbf{a}^Z
\quad\text{and}\quad
S_\ell=S_\ell^Y\cup S_\ell^Z\ \ \text{for all }\ell\in[w].
\end{equation}
Indeed, fix any color $c\in[k]$. Since $V_X=V_Y\uplus V_Z$, we have
\[
|\varphi^{-1}(c)| = |\varphi_Y^{-1}(c)| + |\varphi_Z^{-1}(c)|,
\]
hence $a_c=a_c^Y+a_c^Z$, which is exactly the $c$th coordinate of $\mathbf{a}=\mathbf{a}^Y+\mathbf{a}^Z$.

Next fix any label $\ell\in[w]$. Using \eqref{eq:union-structure}, the set of vertices of label $\ell$ in $G_X$ is the
disjoint union of the label-$\ell$ vertices in $G_Y$ and in $G_Z$, so the set of colors used by $\varphi$ on label
$\ell$ is exactly the union of the sets of colors used by $\varphi_Y$ and $\varphi_Z$ on label $\ell$, that is,
$S_\ell=S_\ell^Y\cup S_\ell^Z$.
This proves \eqref{eq:union-sig-cond}, and the converse direction is immediate from the definitions.

Finally, for every pair $(\sig_Y,\sig_Z)$ satisfying \eqref{eq:union-sig-cond}, the number of proper colorings
$\varphi$ of $G_X$ represented by $\sig$ whose restrictions induce these particular signatures equals
$\mathrm{count}[Y,\sig_Y]\cdot \mathrm{count}[Z,\sig_Z]$, since the choice of $\varphi_Y$ and the choice of
$\varphi_Z$ are independent and determine $\varphi$ uniquely.
Summing over all such pairs yields \eqref{eq:count-union-rigorous}.

\medskip
\noindent\textbf{Join.}
Assume $X=\join_{i,j}(Y)$ with $i\neq j$. Then $V_X=V_Y$, $\lab_X=\lab_Y$, and $G_X$ is obtained from $G_Y$ by
adding every edge between a vertex of label $i$ and a vertex of label $j$.
Fix an $X$-signature $\sig=(\mathbf{S},\mathbf{a})$. We set
\begin{equation}\label{eq:count-join-rigorous}
\mathrm{count}[X,\sig]
\coloneqq 
\begin{cases}
\mathrm{count}[Y,\sig] & \text{if } S_i\cap S_j=\emptyset,\\
0 & \text{otherwise.}
\end{cases}
\end{equation}

Let us justify \eqref{eq:count-join-rigorous}.
Let $\varphi$ be a proper $k$-coloring of $G_Y$ represented by $\sig$.
Then $\varphi$ is also a coloring of $G_X$ (same vertex set).
If $S_i\cap S_j=\emptyset$, then no color $c$ is used on both labels $i$ and $j$ by $\varphi$; hence for every new
edge $uv$ added by $\join_{i,j}$ (with $\lab_X(u)=i$ and $\lab_X(v)=j$) we have $\varphi(u)\neq \varphi(v)$.
Thus $\varphi$ remains proper in $G_X$. Conversely, if $S_i\cap S_j\neq\emptyset$, choose $c\in S_i\cap S_j$.
By definition of $S_i$ and $S_j$, there exist vertices $u,v$ with $\lab_X(u)=i$ and $\lab_X(v)=j$ and
$\varphi(u)=\varphi(v)=c$, and $\join_{i,j}$ adds the edge $uv$, contradicting properness in $G_X$.
Hence no proper coloring of $G_X$ can be represented by $\sig$, so $\mathrm{count}[X,\sig]=0$.
This proves \eqref{eq:count-join-rigorous}.

\medskip
\noindent\textbf{Relabel.}
Assume $X=\ren_{i\to j}(Y)$ with $i\neq j$. Then $V_X=V_Y$, $E(G_X)=E(G_Y)$, and $\lab_X$ is obtained from $\lab_Y$ by changing every label-$i$ vertex into label $j$.

Fix an $X$-signature $\sig'=(\mathbf{S}',\mathbf{a})$.
We set
\begin{equation}\label{eq:count-ren-rigorous}
\mathrm{count}[X,\sig']
\coloneqq 
\sum_{\sig:\ \sig\mapsto_{i\to j}\sig'} \mathrm{count}[Y,\sig],
\end{equation}
where $\sig\mapsto_{i\to j}\sig'$ is the relation of~\Cref{def:sig-map-relabel}.

Let us justify \eqref{eq:count-ren-rigorous}.
Since $E(G_X)=E(G_Y)$, a mapping $\varphi:V_X\to[k]$ is a proper coloring of $G_X$ if and only if it is a proper
coloring of $G_Y$. Thus relabeling does not change the set of proper colorings; it only changes which colors are
recorded in which label-set $S_\ell$.
For a fixed proper coloring $\varphi$, let $\sig$ be its $Y$-signature and $\sig'$ its $X$-signature.
Then necessarily $\mathbf{a}$ is the same in both signatures, and the label-sets satisfy exactly the update in~\Cref{def:sig-map-relabel}, i.e.\ $\sig\mapsto_{i\to j}\sig'$.
Therefore, for each fixed $\sig'$, the set of proper colorings of $G_X$ represented by $\sig'$ is the disjoint union
over all $Y$-signatures $\sig$ mapping to $\sig'$ of the sets of proper colorings represented by $\sig$ in $G_Y$.
Taking cardinalities yields \eqref{eq:count-ren-rigorous}.

\medskip

We compute $\mathrm{count}[X^\star,\sig]$ for all signatures $\sig$ of $X^\star$.
Let $n=kq+r$ with $0\le r<k$.
By~\Cref{lem:root-condition-cw}, a proper $k$-coloring of $G$ is equitable if and only if its size vector
$\mathbf{a}$ satisfies $a_c\in\{q,q+1\}$ for all $c\in[k]$ and exactly $r$ indices satisfy $a_c=q+1$.
Hence the number of equitable $k$-colorings of $G$ equals the sum of $\mathrm{count}[X^\star,\sig]$ over all
signatures $\sig=(\mathbf{S},\mathbf{a})$ of $X^\star$ with this property.

It remains to discuss the running time.
For fixed $k$, represent each $S_\ell\subseteq[k]$ by a $k$-bit mask, so unions and intersections take constant time.
The number of possible choices of $\mathbf{S}$ is $(2^k)^w$, and the number of possible vectors
$\mathbf{a}\in\{0,\dots,n\}^k$ is $(n+1)^k$. Thus for each subexpression $X$ the table contains at most
$M=(2^k)^w\cdot (n+1)^k$
entries.
For join nodes, each entry can be computed in constant time from the child table, hence these subexpressions can be processed in
$O(M)$ time.
For relabel nodes, fix a target signature $\sig'=(\mathbf{S}',\mathbf{a})$.
By \Cref{def:sig-map-relabel}, a preimage under $\mapsto_{i\to j}$ can exist only if $S'_i=\emptyset$; otherwise
$\mathrm{count}[X,\sig']=0$.
Assume therefore that $S'_i=\emptyset$.
Then the only possible preimages are obtained by choosing, for every color in $S'_j$, whether it came from old label $i$,
old label $j$, or both, while all other coordinates are forced.
Hence the number of preimages is at most $3^{|S'_j|}\le 3^k$, which is constant for fixed $k$.
Therefore relabel subexpressions also take $O(M)$ time.
For a union subexpression, we may evaluate \eqref{eq:count-union-rigorous} by iterating over all pairs of child signatures
(at most $M^2$ pairs), computing the unique combined signature, and adding the product of the two child counts; this
takes $O(M^2)$ time.

If $m$ denotes the size of the given expression $\mathcal{E}$, then the number of subexpressions of $\mathcal{E}$ is
$O(m)$. Therefore the arithmetic-operation count is $O(m\cdot M^2)=2^{O(k\cdot w)}\cdot m\cdot n^{O(k)}$.
Finally, $\mathrm{count}[X,\sig]\le k^{|V_X|}\le k^n$, so for fixed $k$ each table entry has $O(n)$ bits, and exact
integer arithmetic contributes only a polynomial factor in $n$.
This completes the proof of~\Cref{thm:cw-counting}.
\end{proof}

We deduce the following.

\begin{corollary}\label{cor:cw-decision}
There exists an algorithm that, given an integer $k\ge 1$ and an $n$-vertex graph $G$ together with a $w$-expression for $G$,
decides whether $G$ admits an equitable $k$-coloring in time
$2^{O(k \cdot w)}\cdot n^{O(k)}$,
assuming a standard polynomial-size encoding of the expression.
\end{corollary}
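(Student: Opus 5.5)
The plan is to obtain the corollary directly from \Cref{thm:cw-counting}: a graph has an equitable $k$-coloring exactly when the number of equitable $k$-colorings is positive. Given $k$, $G$, and a $w$-expression $\mathcal{E}$ for $G$, we run the counting algorithm and output ``yes'' if and only if the returned number is nonzero. Correctness then follows from the correctness of \Cref{thm:cw-counting}, which in turn uses \Cref{lem:root-condition-cw} to characterize equitable colorings at the root.

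The one point to watch is that \Cref{thm:cw-counting} is stated for fixed $k$, while the corollary takes $k$ as part of the input. So I will check that the analysis holds with $k$ variable.
\begin{itemize}
\item The table size is $M=(2^k)^w(n+1)^k$.
\item At union nodes the work is $O(M^2)=2^{O(kw)}n^{O(k)}$.
\item At relabel nodes each entry sums over at most $3^k=2^{O(k)}$ preimages.
\item Bitmask operations on subsets of $[k]$ cost $O(k)$ rather than $O(1)$.
\end{itemize}
Each of these costs is absorbed into $2^{O(kw)}n^{O(k)}$. The counts are at most $k^n$, so they have $O(n\log k)$ bits, and the arithmetic adds only a factor polynomial in $n$ and $k$. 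That factor is also absorbed.

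For the decision version we can avoid big integers altogether by storing Boolean entries instead of counts, replacing sums with OR and products with AND. This is optional, since the overhead of exact counting is already polynomial.

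Finally, the size $m$ of the expression is polynomial in $n$ under a standard encoding, by assumption. The factor $m$ in \Cref{thm:cw-counting} is therefore also absorbed into $n^{O(k)}$, which gives the claimed bound $2^{O(k\cdot w)}\cdot n^{O(k)}$.

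I do not expect a real obstacle. The only care needed is the bookkeeping that confirms every hidden dependence on $k$ stays within the stated bound.
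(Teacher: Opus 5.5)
Your proposal is correct and follows the paper's proof exactly: run the counting algorithm of Theorem~\ref{thm:cw-counting} and accept if and only if the returned count is nonzero. You also check that the bounds of that theorem, which is stated for fixed $k$, still hold when $k$ is part of the input: the $3^k$ relabel preimages, the $O(k)$-cost bitmask operations, the $O(n\log k)$-bit counts, and the $m=\mathrm{poly}(n)$ factor. The paper leaves this implicit, and your accounting is sound.
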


\begin{proof}
Compute the number of equitable $k$-colorings using~\Cref{thm:cw-counting} and accept if and only if the count
is nonzero.
\end{proof}

For the remainder of the section, we provide a $\mathsf{SETH}$-based lower bound for~\textsc{Equitable $k$-Coloring} on graphs of bounded clique-width. We first need the following result of~\cite{lampis2020finer}.

\begin{theorem}[Lampis~\cite{lampis2020finer}]\label{thm:lampis}
For every fixed $k\ge 3$ and every $\varepsilon>0$, if there exists an algorithm solving
\textsc{$k$-Coloring} in time
$O^*((2^k-2-\varepsilon)^{\cw(G)})$ on an input graph $G$,
then SETH is false.
\end{theorem}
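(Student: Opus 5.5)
Since \Cref{thm:lampis} is quoted from~\cite{lampis2020finer}, the plan is to reconstruct the shape of Lampis's reduction, not to build a new one. The guiding observation is that a label class in a clique-width expression can carry, through a proper $k$-coloring, exactly one piece of information that the joins can see: the \emph{set} $S\subseteq[k]$ of colors present on it. If the class is later joined to something, $S=\emptyset$ and $S=[k]$ are useless or impossible. That leaves $2^k-2$ informative states per label, which matches the base of the lower bound.

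\textbf{Step 1: the source problem.} Start from the standard consequence of $\mathsf{SETH}$ for constraint satisfaction. For every $B\ge 2$ and $\varepsilon>0$ there is an arity $q$ such that \textsc{$q$-CSP} over an alphabet of size $B$ with $N$ variables cannot be solved in time $O^\ast((B-\varepsilon)^N)$. This follows by packing $\approx\log_2 B$ Boolean variables into one $B$-ary variable. Fix $B=2^k-2$ and identify the alphabet with the nonempty proper subsets of $[k]$.

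\textbf{Step 2: the grid construction.} Build a graph $G$ with a $w$-expression where $w=N+O_{k,q}(1)$.
\begin{itemize}
\item Each CSP variable $x$ owns one ``long-lived'' label $\ell_x$.
\item The expression is essentially linear and processes the $m$ constraints in order, say repeated a polynomial number of times as in the usual path-like lower-bound constructions.
\item At each stage, for each variable, introduce a small gadget whose vertices are joined to the current class of $\ell_x$ and then relabeled into $\ell_x$.
\end{itemize}
The gadgets must satisfy two requirements:
\begin{enumerate}
\item[(a)] the color set on $\ell_x$ is always a nonempty proper subset and can only shrink (or only grow) from stage to stage;
\item[(b)] a constraint gadget on $q$ variables, built with $O(1)$ temporary labels, admits an extension of the coloring if and only if the $q$ current sets satisfy the constraint.
\end{enumerate}
Monotonicity plus polynomially many repetitions gives, by pigeonhole, a stretch where every set is stable. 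That stretch yields a consistent assignment. Conversely, a satisfying assignment gives a coloring by coloring each $\ell_x$ class with exactly its set.

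\textbf{Step 3: concluding the bound.} An $O^\ast((2^k-2-\varepsilon)^{\cw(G)})$ algorithm would run in time $O^\ast((2^k-2-\varepsilon)^{N+O(1)})=O^\ast((B-\varepsilon)^N)$, contradicting Step 1.

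\textbf{Main obstacle.} I expect the hardest part to be the gadgetry in Step 2. Specifically, the constraint-checking gadget must decide an arbitrary $q$-ary relation on subsets, using only joins to the \emph{whole} label classes and only constantly many extra labels. The first approach I would try has three parts.
\begin{itemize}
\item Build, for each forbidden tuple, a gadget of cliques of size $k-1$ or $k$.
\item Force this gadget to ``see'' a color missing from at least one of the $q$ sets, using small complete multipartite pieces.
\item Combine the per-tuple gadgets by disjoint union; they are independent, so labels can be recycled between them.
\end{itemize}
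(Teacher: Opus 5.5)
The paper gives no proof of this statement; it imports it from Lampis. Your sketch therefore has to stand on its own, and Step 2 has a genuine gap. Requirement (b) cannot be met as stated, and the forbidden-tuple gadget you propose for it is exactly the kind that cannot work.

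A gadget interacts with the vertices already in $\ell_x$ only through joins (relabels add no edges). A join imposes only constraints of the form ``this new vertex avoids every color of $S_x$''. These constraints get weaker when $S_x$ shrinks. Hence the family of tuples $(S_{x_1},\dots,S_{x_q})$ for which an extension exists is closed under replacing any $S_{x_i}$ by a nonempty subset. A general constraint on the $2^k-2$ subsets is not downward closed, so no gadget can admit an extension \emph{if and only if} the current sets satisfy it.

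Concretely, a gadget that only certifies that some color is absent from some $S_{x_i}$ rejects every tuple that coordinatewise contains its forbidden tuple. Take $k=3$ and the unary constraint $x=\{1,2\}$: the gadget for the forbidden value $\{1\}$ must also reject $\{1,2\}$, so completeness fails. Read-only checks cannot tell a set from its subsets, and that is exactly the information needed for all $2^k-2$ values to count. Also, the ``shrink'' option in (a) is not available: as long as $\ell_x$ keeps its vertices, its color set can only grow.

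The missing idea is that the constraint gadgets must also \emph{write} into $\ell_x$, driven by their own choice. Growth-monotonicity plus a stable round then turns these writes into the upward half of the check. Concretely:
\begin{itemize}
\item Use a palette $K_k$ on $k$ private labels. A gadget for a constraint $C$ selects a satisfying tuple $\sigma$.
\item It first relabels into $\ell_{x_i}$ vertices whose colors are forced to be exactly $\sigma(x_i)$.
\item It then joins to $\ell_{x_i}$ vertices whose colors are forced to cover $[k]\setminus\sigma(x_i)$.
\item Unused slots take default colors inside, respectively outside, $\sigma(x_i)$; both exist because $\sigma(x_i)$ is a nonempty proper subset.
\end{itemize}
This enforces $S_{x_i}^{\mathrm{before}}\subseteq\sigma(x_i)=S_{x_i}^{\mathrm{after}}$. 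Each set changes at most $k-1$ times, since its size strictly increases and stays below $k$. So among $N(k-1)+1$ rounds one round is stable, and there every gadget pins the constant tuple to a satisfying one. Conversely, a satisfying assignment yields a coloring with constant sets.

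Your forbidden-tuple idea can be repaired the same way: for $c\notin T_i$, writing a vertex of color $c$ into $\ell_{x_i}$ certifies $c\in S_{x_i}$ in a stable round. All gadgets have constant size, so the width stays $N+O_{k,q}(1)$.

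A smaller point in Step 1: $\log_2(2^k-2)$ is not an integer. You must pack $\gamma$ Boolean variables into $t$ variables over an alphabet of size $B$ with $B^t\ge 2^\gamma$ and $t/\gamma$ close to $1/\log_2 B$.
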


\begin{theorem}\label{thm:cw-seth}
Let $G$ be a graph. For every fixed $k\ge 3$ and every $\varepsilon >0$, unless SETH fails,
\textsc{Equitable $k$-Coloring} cannot be solved in time
$O^\ast \big((2^k - 2 - \varepsilon)^{\cw(G)}\big),$
where $\cw(G)$ denotes the clique-width of $G$.
\end{theorem}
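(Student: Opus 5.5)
The plan is a polynomial-time padding reduction from \textsc{$k$-Coloring} to \textsc{Equitable $k$-Coloring} that increases the clique-width by at most an additive constant, combined with \Cref{thm:lampis}. Fix $k\ge 3$ and $\varepsilon>0$, and suppose some algorithm $\mathcal{A}$ solves \textsc{Equitable $k$-Coloring} in time $O^\ast\bigl((2^k-2-\varepsilon)^{\cw(G)}\bigr)$. Given an instance $G$ of \textsc{$k$-Coloring} with $n=|V(G)|$ vertices, construct $G'$ as the disjoint union of $G$ with an independent set $I$ of $(k-1)n$ new vertices. Then $|V(G')|=kn$, and $G'$ has size polynomial in $n$ since $k$ is fixed.

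The first step is to show that $G$ is $k$-colorable if and only if $G'$ is equitably $k$-colorable.
\begin{itemize}
\item If $G'$ has an equitable $k$-coloring, its restriction to $V(G)$ is a proper $k$-coloring of $G$, because $G$ is an induced subgraph of $G'$.
\item Conversely, let $\varphi$ be a proper $k$-coloring of $G$ with class sizes $a_c\coloneqq|\varphi^{-1}(c)|$. Each $a_c\le n$, and $\sum_c (n-a_c)=kn-n=(k-1)n=|I|$. So we can assign exactly $n-a_c$ vertices of $I$ to color $c$ for every $c\in[k]$.
\item Since $I$ is independent and anticomplete to $V(G)$, the extended coloring is proper. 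Every class then has size exactly $n$, so the coloring is equitable. (Equivalently, in terms of \Cref{lem:root-condition-cw}, we have $q=n$ and $r=0$.)
\end{itemize}

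The second step is to bound $\cw(G')$. From any $w$-expression for $G$ we obtain one for $G'$ by repeatedly applying $\union(\cdot,\cdot)$ with single-vertex expressions $1(v)$ for $v\in I$. No join is ever applied afterwards, so no edges are added, and no new label is needed. Hence $\cw(G')\le\max\{\cw(G),1\}=\cw(G)$. Even if one prefers to reserve a fresh label, the bound $\cw(G')\le\cw(G)+1$ suffices.

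The third step is to run $\mathcal{A}$ on $G'$. Its running time is
\[
O^\ast\bigl((2^k-2-\varepsilon)^{\cw(G')}\bigr)\le (2^k-2-\varepsilon)\cdot O^\ast\bigl((2^k-2-\varepsilon)^{\cw(G)}\bigr),
\]
and the polynomial factor in $|V(G')|=kn$ is polynomial in $n$. This decides \textsc{$k$-Coloring} on $G$ in time $O^\ast\bigl((2^k-2-\varepsilon)^{\cw(G)}\bigr)$, so by \Cref{thm:lampis} SETH fails.

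There is no real obstacle here. The only points needing care are the following:
\begin{itemize}
\item The padding must make the size constraints exactly fillable for every proper coloring of $G$. This is why we pad up to $kn$ vertices, so that the target class size $n$ dominates every possible $a_c$.
\item The additive increase in clique-width only costs a constant multiplicative factor, which is absorbed into the $O^\ast$ notation.
\item Neither algorithm needs to compute $\cw$. The bound is purely a statement about running time in terms of the parameter.
\end{itemize}
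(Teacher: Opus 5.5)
Your proposal is correct and matches the paper's proof: the same padding of $G$ by an independent set of $(k-1)n$ vertices, the same argument that fills every color class to size exactly $n$, and the same appeal to \Cref{thm:lampis}. The only difference is that you reuse an existing label for the padding vertices, which is valid because no join follows. This gives $\cw(G')\le\cw(G)$ instead of the paper's $\cw(G')\le\cw(G)+1$ with a fresh label, and so only removes a constant factor.
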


\begin{proof}
We proceed by a reduction from \textsc{$k$-Coloring} that increases clique-width by at most one.
Suppose for a contradiction that there exists an algorithm $\mathcal{A}$ which, on input a graph $H$,
decides \textsc{Equitable $k$-Coloring} in time
$O^{\ast}((2^k-2-\varepsilon)^{\cw(H)})$.
Let $G$ be an instance of \textsc{$k$-Coloring} and put $n\coloneqq |V(G)|$.
Let $I$ be an independent set on a fresh vertex set $U$ with $|U|=(k-1)n$, and define
$G' \coloneqq  G \uplus I.$
Note that $V(G')=V(G)\uplus U$ and $E(G')=E(G)$.

\begin{claim}\label{clm:pad-equitable}
$G$ is properly $k$-colorable if and only if $G'$ admits an equitable proper $k$-coloring.
\end{claim}

\begin{claimproof}
We first prove the forward implication.
Assume that $G$ has a proper $k$-coloring $\psi:V(G)\to[k]$.
For each color $c\in[k]$ let $s_c\coloneqq |\psi^{-1}(c)|$, so $\sum_{c=1}^k s_c=n$.
Choose a partition $U=\biguplus_{c=1}^k U_c$ such that $|U_c|=n-s_c$ for every $c\in[k]$.
This is possible because
\[
\sum_{c=1}^k (n-s_c) = kn - \sum_{c=1}^k s_c = kn-n = (k-1)n = |U|.
\]
Define $\psi':V(G')\to[k]$ by $\psi'|_{V(G)}=\psi$ and $\psi'(u)=c$ for all $u\in U_c$.
We show that $\psi'$ is an equitable proper $k$-coloring of $G'$.
Observe that every edge of $G'$ lies inside $V(G)$, since $U$ is independent and there are no edges between $V(G)$ and $U$.
Thus for every edge $xy\in E(G')=E(G)$ we have $\psi'(x)=\psi(x)\neq \psi(y)=\psi'(y)$, because $\psi$ is proper.
Moreover, for each $c\in[k]$ we have
\[
|\psi'^{-1}(c)| = |\psi^{-1}(c)| + |U_c| = s_c + (n-s_c) = n,
\]
so all $k$ color classes have the same size, hence $\psi'$ is equitable.

For the backward implication, assume that $G'$ admits an equitable proper $k$-coloring $\varphi:V(G')\to[k]$.
Consider the restriction $\varphi|_{V(G)}:V(G)\to[k]$.
Since $E(G)=E(G')\cap \binom{V(G)}{2}$, every edge $xy\in E(G)$ is also an edge of $G'$.
Therefore $\varphi(x)\neq \varphi(y)$ for all $xy\in E(G)$, because $\varphi$ is proper on $G'$.
Hence $\varphi|_{V(G)}$ is a proper $k$-coloring of $G$.
This proves the claim.
\end{claimproof}

\begin{claim}\label{clm:cw-padding}
$\cw(G') \le \cw(G)+1$.
\end{claim}

\begin{claimproof}
Let $w\coloneqq \cw(G)$. By definition of clique-width (see~\Cref{sec:prelim}), there exists a $w$-expression $\mathcal{E}$ that constructs $G$.
We construct a $(w+1)$-expression $\mathcal{E}'$ that constructs $G'$.
Let $\ell^\star\coloneqq w+1$ be a new label not used in $\mathcal{E}$.
Starting from $\mathcal{E}$, we append the following operation $(k-1)n$ times:
create a new single-vertex $(w+1)$-labelled graph whose unique vertex has label $\ell^\star$,
and take the disjoint union of the current graph with this vertex via $\union(\cdot,\cdot)$.
During this extension we never apply any join operation $\join_{\ell^\star,\ell}$ involving $\ell^\star$, and we never
relabel $\ell^\star$.
We claim that the resulting expression constructs $G\uplus I$.
Indeed, disjoint union introduces no edges between the previously constructed vertices and the new vertex.
Since we never apply a join operation involving $\ell^\star$, the newly created vertices never gain incident edges,
so they remain isolated. Thus the vertices created with label $\ell^\star$ induce an independent set of size $(k-1)n$,
and the induced subgraph on the original vertex set is exactly $G$, since we started from $\mathcal{E}$ and did not
change edges among the original vertices.
Therefore $\mathcal{E}'$ constructs $G'$, using at most $w+1$ labels, and hence $\cw(G')\le w+1=\cw(G)+1$.
\end{claimproof}

We are now ready to complete the reduction.
Given $G$, we construct $G'$ in polynomial time and run $\mathcal{A}$ on $G'$.
By Claim~\ref{clm:pad-equitable}, $\mathcal{A}$ answers YES on $G'$ if and only if $G$ is $k$-colorable.
Moreover, by Claim~\ref{clm:cw-padding},
$\cw(G') \le \cw(G)+1.$
Hence the running time of $\mathcal{A}$ on $G'$ is
\[
O^\ast\!\bigl((2^k-2-\varepsilon)^{\cw(G')}\bigr)
\;\le\;
O^\ast\!\bigl((2^k-2-\varepsilon)^{\cw(G)+1}\bigr).
\]
Since for fixed $k$ and $\varepsilon>0$, the factor $(2^k-2-\varepsilon)$ is a constant absorbed by $O^*$, it follows that $\mathcal{A}$ would yield an algorithm for \textsc{$k$-Coloring} running in time
\[
O^\ast\!\bigl((2^k-2-\varepsilon)^{\cw(G)}\bigr).
\]
This contradicts the result of~\Cref{thm:lampis}, and hence completes the proof of~\Cref{thm:cw-seth}.
\end{proof}


\section{Graphs of bounded Linear clique-width: The Proof of~\Cref{thm:lcw-counting-restate}}\label{sec:lcw}

We now prove our result for graphs of bounded linear clique-width.
To design the dynamic programming algorithm, we process a given linear $w$-expression from left to right.
Accordingly, at each step we need to refer not only to the final graph produced by the whole expression, but also to the
labelled graph obtained after executing an initial prefix of the expression.
These intermediate labelled graphs allow us to describe how labels evolve over time, which labels may still participate in
future join operations, and which partial colorings can still be extended to a proper coloring of the final graph.

Any linear $w$-expression can be viewed equivalently as a sequence of operations evaluated from left to right, starting
from the empty $w$-labelled graph.
Let us distinguish this sequential representation from the term representation used earlier for clique-width expressions, since here the left-to-right order of the construction is essential for the dynamic programming algorithm. In this sequential representation, the creation of a new single vertex of label $i$ is denoted by $\mathsf{add}(i)$, while
the operations corresponding to $\join_{i,j}$ and $\ren_{i\to j}$ are denoted by $\eta(i,j)$ and $\rho(i\to j)$,
respectively.
We write such a sequence as
$\Pi=\pi_1\pi_2\cdots \pi_L.$

\subsection*{Intermediate graphs}

Let $\Pi=\pi_1\pi_2\cdots \pi_L$ be a fixed linear $w$-expression.
For each $t\in\{0,1,\dots,L\}$, let
\[
\mathbf{F}_t=(F_t;C_1^{(t)},\dots,C_w^{(t)})
\]
denote the intermediate labelled graph obtained after executing the prefix $\pi_1\cdots\pi_t$,
with $\mathbf{F}_0$ the empty $w$-labelled graph.
Thus $\mathbf{F}_L$ is the $w$-labelled graph produced by the whole expression $\Pi$.
We write $V_t\coloneqq V(F_t)$, $E_t\coloneqq E(F_t)$, and $n_t\coloneqq |V_t|$.

\begin{definition}\label{def:label-evolution}
For distinct $p,q\in[w]$, let $r_{p\to q}:[w]\to[w]$ be the map defined by
\[
r_{p\to q}(p)=q,
\qquad
r_{p\to q}(x)=x \text{ for all }x\neq p.
\]
For $t<s$, define the map $\tau_{t,s}:[w]\to[w]$ by
\[
\tau_{t,s}\coloneqq f_s\circ f_{s-1}\circ\cdots\circ f_{t+1},
\]
where for each $r\in\{t+1,\dots,s\}$,
\[
f_r=
\begin{cases}
r_{p\to q} & \text{if }\pi_r=\rho(p\to q),\\
\mathrm{id}_{[w]} & \text{if }\pi_r=\mathsf{add}(\cdot)\text{ or }\pi_r=\eta(\cdot,\cdot).
\end{cases}
\]
In other words, $\tau_{t,s}(i)$ is the label at time $s$ of a vertex that had label $i$ at time $t$. We also let $\tau_{t,t}\coloneqq  \mathrm{id}_{[w]}$.
\end{definition}

\begin{lemma}\label{lem:tau-correct}
Fix $t<s$ and let $v\in V_t$ be any vertex.
If $v\in C_i^{(t)}$, then in $\mathbf{F}_s$ the vertex $v$ has label $\tau_{t,s}(i)$, that is,
$v\in C^{(s)}_{\tau_{t,s}(i)}$.
\end{lemma}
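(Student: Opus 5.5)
The proof goes by induction on $s-t$, for fixed $t$, using the recursive structure of $\tau_{t,s}$ from \Cref{def:label-evolution}. Namely, $\tau_{t,s+1}=f_{s+1}\circ\tau_{t,s}$ for every $s\ge t$. For $s=t$ this holds because $\tau_{t,t}=\mathrm{id}_{[w]}$, and for $s>t$ it is immediate from the definition as a composition. It is convenient to prove the slightly stronger statement for all $s\ge t$, with $s=t$ as the base case. That case is trivial: $\tau_{t,t}(i)=i$, and $v\in C_i^{(t)}$ by assumption.

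For the inductive step, assume $v\in C^{(s)}_{\tau_{t,s}(i)}$ and write $j\coloneqq\tau_{t,s}(i)$. We examine the operation $\pi_{s+1}$. A vertex that exists at time $s$ still exists at time $s+1$, since no operation deletes vertices. We then split into three cases according to the type of $\pi_{s+1}$.

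\emph{Case $\pi_{s+1}=\mathsf{add}(\cdot)$.} This step only creates a fresh vertex, so every vertex of $V_s$ keeps its label. Since $f_{s+1}=\mathrm{id}_{[w]}$, we get $v\in C^{(s+1)}_{j}=C^{(s+1)}_{f_{s+1}(j)}$.

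\emph{Case $\pi_{s+1}=\eta(\cdot,\cdot)$.} This step only adds edges, so all labels are unchanged, and again $f_{s+1}=\mathrm{id}_{[w]}$.

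\emph{Case $\pi_{s+1}=\rho(p\to q)$.} By the semantics of relabelling, a vertex of label $p$ receives label $q$ and all other vertices keep their labels. Hence the label of $v$ at time $s+1$ is $r_{p\to q}(j)=f_{s+1}(j)$. This is checked by distinguishing $j=p$ from $j\neq p$.

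In all three cases $v\in C^{(s+1)}_{f_{s+1}(\tau_{t,s}(i))}=C^{(s+1)}_{\tau_{t,s+1}(i)}$, which completes the induction.

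There is no real obstacle here. The only point needing care is bookkeeping: the composition order $f_s\circ\cdots\circ f_{t+1}$ must be applied innermost-first, matching the chronological order of the operations, and the induction has to be anchored at $s=t$ so that the statement for $t<s$ follows.
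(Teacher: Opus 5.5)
Your proof is correct and follows essentially the same route as the paper: induction on $s-t$, splitting into cases according to whether the last operation is an add, a join, or a relabel. The only difference is that you anchor the induction at $s=t$ using $\tau_{t,t}=\mathrm{id}_{[w]}$ and the recursion $\tau_{t,s+1}=f_{s+1}\circ\tau_{t,s}$, whereas the paper starts at $s=t+1$ and so does the case analysis once in the base case and again in the inductive step.
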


\begin{proof}
We prove the statement by induction on $s-t$.
If $s=t+1$, then $\pi_{t+1}$ is either $\mathsf{add}(\cdot)$ or $\eta(\cdot,\cdot)$, which do not change the labels of
existing vertices, or $\rho(p\to q)$, which changes the label exactly when $i=p$.
This is exactly the effect of $\tau_{t,t+1}$.

For the induction step, write $s'=s-1$.
By the induction hypothesis, in $\mathbf{F}_{s'}$ the label of $v$ is $\tau_{t,s'}(i)$.
If $\pi_s$ is $\mathsf{add}(\cdot)$ or $\eta(\cdot,\cdot)$, then labels do not change and
$\tau_{t,s}=\tau_{t,s'}$.
If $\pi_s=\rho(p\to q)$, then $v$ changes label exactly when its label in $\mathbf{F}_{s'}$ is $p$, that is,
when $\tau_{t,s'}(i)=p$, and in that case its new label becomes $q$.
This is precisely the effect of composing $\tau_{t,s'}$ with the relabel map $r_{p\to q}$, which is how
$\tau_{t,s}$ is defined.
\end{proof}

\begin{definition}[Live labels]\label{def:live-label}
Let $t\in\{0,\dots,L\}$ and $i\in[w]$.
We say that label $i$ is \emph{live at time $t$} if $C_i^{(t)}\neq\emptyset$ and there exists
$s\in\{t+1,\dots,L\}$ such that $\pi_s=\eta(p,q)$ with $p\neq q$ and either
\[
\tau_{t,s-1}(i)=p,\qquad C_q^{(s-1)}\neq\emptyset,
\]
or
\[
\tau_{t,s-1}(i)=q,\qquad C_p^{(s-1)}\neq\emptyset.
\]
We write $\Live(t)\subseteq[w]$ for the set of live labels at time $t$.
\end{definition}

Intuitively, a label $i$ is live at time $t$ precisely when the set of colors currently used on vertices of label $i$ may still affect properness at some future join operation; labels that are not live can therefore be ignored by the dynamic programming algorithm.

\begin{claim}\label{clm:compute-live}
Given a linear $w$-expression $\Pi=\pi_1\cdots\pi_L$, one can compute $\Live(t)$ for all
$t\in\{0,\dots,L\}$ in time $O(L^2w^2)$.
\end{claim}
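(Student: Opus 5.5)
We need to compute $\Live(t)$ for every $t\in\{0,\dots,L\}$ in $O(L^2w^2)$ time, and we do it by direct simulation of \Cref{def:live-label}. First, one left-to-right pass over $\Pi$ records, for every $s\in\{0,\dots,L\}$ and every label $\ell\in[w]$, whether $C_\ell^{(s)}\neq\emptyset$. We keep an array of counters $|C_\ell^{(s)}|$ and update it in $O(1)$ per operation. An $\mathsf{add}(i)$ increments the counter for label $i$. A $\rho(p\to q)$ adds the counter for $p$ to that of $q$ and sets the counter for $p$ to zero. An $\eta(\cdot,\cdot)$ changes nothing. Storing the resulting emptiness bits for all $s$ costs $O(Lw)$ time and space. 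The counters are correct by the semantics of the operations, which is the same reasoning as in \Cref{lem:tau-correct}.

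Next, fix a start time $t$. We scan $s=t+1,\dots,L$ and maintain the map $\tau_{t,s-1}$ as an array of length $w$, starting from $\tau_{t,t}=\mathrm{id}_{[w]}$. At step $s$ there are two cases.

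\begin{itemize}
\item If $\pi_s=\eta(p,q)$, then for every $i\in[w]$ we test whether $\tau_{t,s-1}(i)=p$ and $C_q^{(s-1)}\neq\emptyset$, or whether $\tau_{t,s-1}(i)=q$ and $C_p^{(s-1)}\neq\emptyset$. Each test is $O(1)$ using the precomputed emptiness bits, and whenever a test succeeds we mark $i$ as a \emph{witnessed} label for $t$.
\item We then update the map to $\tau_{t,s}=f_s\circ\tau_{t,s-1}$. If $\pi_s=\rho(p\to q)$, this replaces every entry equal to $p$ by $q$, which takes $O(w)$ time. Otherwise the map is unchanged.
\end{itemize}

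Correctness of the maintained map follows immediately from \Cref{def:label-evolution}, since $\tau_{t,s}=f_s\circ\tau_{t,s-1}$.

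After the scan, $\Live(t)$ is the set of witnessed labels $i$ with $C_i^{(t)}\neq\emptyset$. This matches \Cref{def:live-label} literally, because we examine exactly every $s>t$ with $\pi_s$ a join and both disjuncts of the definition.

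For the running time, each pair $(t,s)$ costs $O(w)$. Over all $O(L^2)$ pairs this gives $O(L^2w)\subseteq O(L^2w^2)$, plus $O(Lw)$ for the preprocessing. The only point that needs care is that the emptiness bits must refer to time $s-1$, before the join, and that the map used at a join is $\tau_{t,s-1}$, the one computed before applying step $s$. Both hold by the order of operations in the scan. There is no genuine obstacle: the claimed bound is loose, and the simple quadratic simulation already achieves it.
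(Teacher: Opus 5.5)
Your proof is correct and follows the paper's own argument essentially step for step. Like the paper, you precompute the emptiness of every label class $C_\ell^{(s)}$ by a forward simulation, then for each start time $t$ scan $s=t+1,\dots,L$ while maintaining $\tau_{t,s-1}$ and testing the two witness conditions at each join step. Your accounting of $O(w)$ work per pair $(t,s)$ gives $O(L^2w)$, which is slightly sharper than the paper's $O(Lw^2)$ per start time and still within the claimed $O(L^2w^2)$ bound.
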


\begin{claimproof}
We first compute all label-class sizes $|C_i^{(t)}|$ for all $t$ and $i$ by simulating the expression forward.
This takes $O(Lw)$ time.
Fix $t\in\{0,\dots,L\}$.
We compute $\Live(t)$ by scanning the future steps $s=t+1,\dots,L$.
At the moment we process step $s$, we maintain a map $\mu:[w]\to[w]$ satisfying
$\mu=\tau_{t,s-1}$.
Initially, $\mu$ is the identity map on $[w]$.

If $\pi_s=\rho(p\to q)$, then
\[
\tau_{t,s}=r_{p\to q}\circ \tau_{t,s-1}.
\]
Hence we update $\mu$ to $r_{p\to q}\circ \mu$, equivalently:
for all $i\in[w]$ with $\mu(i)=p$, set $\mu(i)\leftarrow q$.
If $\pi_s=\eta(p,q)$ with $p\neq q$, then $\eta(p,q)$ does not affect $\mu$.

Whenever $\pi_s=\eta(p,q)$, for each label $i$ with $C_i^{(t)}\neq\emptyset$ we check whether
\[
\mu(i)=p \text{ and } C_q^{(s-1)}\neq\emptyset,
\]
or
\[
\mu(i)=q \text{ and } C_p^{(s-1)}\neq\emptyset.
\]
If so, we add $i$ to $\Live(t)$.

For fixed $t$, there are $O(L)$ future steps, and at each step we may inspect all $w$ labels and update or test a map on
$[w]$, so the running time is $O(Lw^2)$ for this fixed $t$.
Over all $t\in\{0,\dots,L\}$, the total running time is therefore $O(L^2w^2)$.
\end{claimproof}

\begin{definition}\label{def:extendible}
A proper $k$-coloring $\varphi_t:V_t\to[k]$ of $F_t$ is \emph{extendible} if there exists a proper $k$-coloring
$\varphi:V_L\to[k]$ of the graph $F_L$ produced by the whole linear expression $\Pi$ such that
$\varphi|_{V_t}=\varphi_t$.
\end{definition}

\begin{lemma}\label{lem:live-proper-subset}
Let $t\in\{0,\dots,L\}$ and let $\varphi_t$ be an extendible proper $k$-coloring of $F_t$.
Then for every live label $i\in\Live(t)$ the set of colors used on $C_i^{(t)}$,
\[
S_i(\varphi_t)\coloneqq \{\,c\in[k] : \exists v\in C_i^{(t)} \text{ with }\varphi_t(v)=c\,\},
\]
satisfies $S_i(\varphi_t)\neq\emptyset$ and $S_i(\varphi_t)\neq [k]$.
\end{lemma}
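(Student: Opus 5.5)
The plan is to prove the two assertions separately. Both come directly from the definitions of live labels (\Cref{def:live-label}), extendibility (\Cref{def:extendible}), and the label-tracking lemma \Cref{lem:tau-correct}.

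Nonemptiness is immediate. A live label $i$ satisfies $C_i^{(t)}\neq\emptyset$ by definition, and $\varphi_t$ assigns a color to every vertex of $V_t$. So if $v\in C_i^{(t)}$, then $\varphi_t(v)\in S_i(\varphi_t)$.

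For $S_i(\varphi_t)\neq[k]$ I would argue by contradiction. Let $\varphi$ be a proper $k$-coloring of $F_L$ with $\varphi|_{V_t}=\varphi_t$, and suppose $S_i(\varphi_t)=[k]$. Since $i$ is live, there is a step $s>t$ with $\pi_s=\eta(p,q)$; by symmetry assume $\tau_{t,s-1}(i)=p$ and $C_q^{(s-1)}\neq\emptyset$. Choose $u\in C_q^{(s-1)}$. By \Cref{lem:tau-correct}, every vertex of $C_i^{(t)}$ lies in $C_p^{(s-1)}$, using $\tau_{t,t}=\mathrm{id}$ when $s-1=t$. Note that $u\notin C_i^{(t)}$, because $p\neq q$. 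The operation $\eta(p,q)$ then makes $u$ adjacent in $F_s$ to every vertex of $C_i^{(t)}$. Edges are never deleted in later steps, so these edges persist in $F_L$. Since $C_i^{(t)}$ uses all $k$ colors under $\varphi_t=\varphi|_{V_t}$, some $v\in C_i^{(t)}$ has $\varphi(v)=\varphi(u)$, and $uv\in E(F_L)$. This contradicts the properness of $\varphi$.

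The only point needing care is this bookkeeping: labels evaluated at time $s-1$ (just before the join) are what the join uses, edges are monotone along the expression, and $u$ lies in $V_{s-1}\subseteq V_L$, so $\varphi(u)$ is defined. I do not expect any real obstacle beyond stating these facts explicitly.
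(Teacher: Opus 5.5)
Your proposal is correct and follows essentially the same argument as the paper. You take a witnessing join $\eta(p,q)$ at step $s$, use the label-tracking lemma to place $C_i^{(t)}$ inside $C_p^{(s-1)}$, and use the nonempty opposite class $C_q^{(s-1)}$ to show that not all $k$ colors can appear on $C_i^{(t)}$. The paper phrases this last step as the disjointness $S_p(\varphi_{s-1})\cap S_q(\varphi_{s-1})=\emptyset$ rather than through a single vertex $u$, and your remark on the boundary case $s-1=t$ (where $\tau_{t,t}=\mathrm{id}$) covers a case that the label-tracking lemma's hypothesis $t<s$ formally excludes.
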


\begin{proof}
Since $i\in\Live(t)$, we have $C_i^{(t)}\neq\emptyset$, hence $S_i(\varphi_t)\neq\emptyset$.
Let $s>t$ and $\eta(p,q)=\pi_s$ witness that $i$ is live.
Without loss of generality assume
\[
\tau_{t,s-1}(i)=p
\qquad\text{and}\qquad
C_q^{(s-1)}\neq\emptyset.
\]
By~\Cref{lem:tau-correct}, every vertex of $C_i^{(t)}$ belongs to $C_p^{(s-1)}$.
Let $\varphi$ be a proper $k$-coloring of $F_L$ extending $\varphi_t$, and set
$\varphi_{s-1}\coloneqq \varphi|_{V_{s-1}}$.
For any label $\ell\in[w]$, write
\[
S_\ell(\varphi_{s-1})
\coloneqq 
\{\,c\in[k]: \exists v\in C_\ell^{(s-1)} \text{ with } \varphi_{s-1}(v)=c\,\}.
\]
Since $\pi_s=\eta(p,q)$ adds all edges between $C_p^{(s-1)}$ and $C_q^{(s-1)}$, properness of $\varphi$ implies
\[
S_p(\varphi_{s-1})\cap S_q(\varphi_{s-1})=\emptyset.
\]
Because $C_q^{(s-1)}\neq\emptyset$, we have $S_q(\varphi_{s-1})\neq\emptyset$, and hence
$S_p(\varphi_{s-1})\neq [k]$.
Finally, $S_i(\varphi_t)\subseteq S_p(\varphi_{s-1})$, so $S_i(\varphi_t)\neq [k]$.
\end{proof}

\begin{claim}\label{clm:live-stability-add}
If $\pi_t=\mathsf{add}(i)$, then
\[
\Live(t)\setminus\{i\}=\Live(t-1)\setminus\{i\}.
\]
\end{claim}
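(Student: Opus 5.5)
The plan is to compare the two defining conditions in \Cref{def:live-label} directly, label by label, for every label $\ell\in[w]\setminus\{i\}$. Three facts about the step $\pi_t=\mathsf{add}(i)$ drive the argument, and I would establish them first.

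\textbf{Step 1: label classes away from $i$ are unchanged.} The operation $\mathsf{add}(i)$ creates one new vertex of label $i$. It does not change the labels of existing vertices, and it adds no edges. Hence $C_\ell^{(t)}=C_\ell^{(t-1)}$ for every $\ell\neq i$. In particular, the nonemptiness requirement $C_\ell^{(\cdot)}\neq\emptyset$ in the definition is the same at times $t-1$ and $t$.

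\textbf{Step 2: the label-evolution maps agree.} By \Cref{def:label-evolution}, $f_t=\mathrm{id}_{[w]}$, because $\pi_t$ is an $\mathsf{add}$. Therefore $\tau_{t-1,s-1}=\tau_{t,s-1}\circ f_t=\tau_{t,s-1}$ for every $s-1\ge t$. The case $s-1=t$ also holds, since there $\tau_{t-1,t}=f_t=\mathrm{id}=\tau_{t,t}$.

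\textbf{Step 3: the extra candidate witness is not a join.} A witness for liveness at time $t-1$ ranges over $s\in\{t,\dots,L\}$, whereas at time $t$ it ranges over $s\in\{t+1,\dots,L\}$. The only additional candidate at time $t-1$ is $s=t$. It cannot be a witness, because $\pi_t$ is an $\mathsf{add}$ and not an $\eta(p,q)$.

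\textbf{Conclusion.} Fix $\ell\neq i$. The candidate witnesses $s$ are the same at both times, namely the $s\in\{t+1,\dots,L\}$ with $\pi_s=\eta(p,q)$. For each such $s$:
\begin{itemize}
\item the condition $\tau_{\cdot,s-1}(\ell)\in\{p,q\}$ is identical at times $t-1$ and $t$ by Step~2;
\item the side conditions $C_q^{(s-1)}\neq\emptyset$ and $C_p^{(s-1)}\neq\emptyset$ refer only to the absolute time $s-1$, so they do not depend on the starting time at all.
\end{itemize}
Together with Step~1, this gives $\ell\in\Live(t-1)$ if and only if $\ell\in\Live(t)$.

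There is no real obstacle here. The only point needing care is the index bookkeeping in Step~3: one has to check that the boundary witness $s=t$, available only for time $t-1$, is excluded because $\pi_t$ is not a join. One must also be careful that the definition fixes the classes $C_p^{(s-1)}$ and $C_q^{(s-1)}$ at absolute times, rather than relative to the starting time.
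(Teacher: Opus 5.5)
Your proof is correct and follows the same route as the paper: for $\ell\neq i$ you show that $C_\ell$ is unchanged and that $\tau_{t-1,s-1}(\ell)=\tau_{t,s-1}(\ell)$ because $f_t=\mathrm{id}_{[w]}$, so the witness conditions coincide at times $t-1$ and $t$. Your Step~3, which rules out the extra candidate witness $s=t$ at time $t-1$ because $\pi_t$ is not a join, makes explicit a boundary case that the paper's proof leaves unstated.
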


\begin{claimproof}
Fix $\ell\in[w]\setminus\{i\}$.
The operation $\mathsf{add}(i)$ does not change $C_\ell$ and does not affect the label evolution of already present
vertices.
Hence $C_\ell^{(t)}\neq\emptyset$ if and only if $C_\ell^{(t-1)}\neq\emptyset$, and for every $s\ge t+1$ we have
\[
\tau_{t,s-1}(\ell)=\tau_{t-1,s-1}(\ell).
\]
Therefore the existence of a witness join step in~\Cref{def:live-label} is identical at times $t-1$ and $t$
for label $\ell$.
\end{claimproof}

\begin{claim}\label{clm:add-live}
If $\pi_t=\mathsf{add}(i)$ and $i\in\Live(t)$ but $i\notin\Live(t-1)$, then
$C_i^{(t-1)}=\emptyset.$
\end{claim}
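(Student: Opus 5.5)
We argue by contradiction. Suppose that $\pi_t=\mathsf{add}(i)$, that $i\in\Live(t)$ and $i\notin\Live(t-1)$, but that $C_i^{(t-1)}\neq\emptyset$. We will show that $i\in\Live(t-1)$.

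First we compare the two snapshots. The operation $\mathsf{add}(i)$ leaves every label class other than $C_i$ unchanged, and it only enlarges $C_i$. Hence $C_\ell^{(t)}\supseteq C_\ell^{(t-1)}$ for every $\ell$. It also contributes the identity map in \Cref{def:label-evolution}, so $f_t=\mathrm{id}_{[w]}$. Consequently, for every $s\ge t+1$,
\[
\tau_{t-1,s-1}=\tau_{t,s-1}\circ f_t=\tau_{t,s-1}.
\]
This is the same observation already used in the proof of Claim~\ref{clm:live-stability-add}, now applied to the label $i$ itself.

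Next we transfer the witness. Since $i\in\Live(t)$, \Cref{def:live-label} provides some $s\in\{t+1,\dots,L\}$ with $\pi_s=\eta(p,q)$ and, without loss of generality, $\tau_{t,s-1}(i)=p$ and $C_q^{(s-1)}\neq\emptyset$. We check that the same $s$ witnesses liveness at time $t-1$:
\begin{itemize}
\item the range condition holds, since $s\in\{t,\dots,L\}$;
\item the label condition holds, since $\tau_{t-1,s-1}(i)=\tau_{t,s-1}(i)=p$;
\item the nonemptiness condition $C_q^{(s-1)}\neq\emptyset$ does not depend on the reference time, so it still holds;
\item the remaining requirement $C_i^{(t-1)}\neq\emptyset$ is exactly our assumption.
\end{itemize}
Therefore $i\in\Live(t-1)$, which is a contradiction, and so $C_i^{(t-1)}=\emptyset$.

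The only delicate point is the edge case $t=1$, where $\tau_{0,0}=\mathrm{id}$ and $C_i^{(0)}=\emptyset$ holds trivially, so the conclusion is immediate. We also rely on $\tau_{t-1,t-1}=\tau_{t,t}=\mathrm{id}$ when $s=t+1$, which the composition identity above already covers. We expect no real obstacle beyond this careful index bookkeeping.
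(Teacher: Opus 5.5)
Your proof is correct and follows the paper's argument: assume $C_i^{(t-1)}\neq\emptyset$, observe that $\mathsf{add}(i)$ contributes $f_t=\mathrm{id}_{[w]}$ so that $\tau_{t-1,s-1}=\tau_{t,s-1}$, and reuse the witness join step $s$ at time $t-1$, contradicting $i\notin\Live(t-1)$. Your explicit bookkeeping (the composition identity including the case $s=t+1$, and the check that $s\ge t$) makes precise what the paper states in words. The separate $t=1$ remark is unnecessary, since the contradiction argument already covers it.
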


\begin{claimproof}
Suppose for a contradiction that $C_i^{(t-1)}\neq\emptyset$.
Since $i\in\Live(t)$, let $s>t$ be a witness join step for $i$ at time $t$.
The operation $\mathsf{add}(i)$ does not affect the label evolution of vertices already present, so
$
\tau_{t-1,s-1}(i)=\tau_{t,s-1}(i).
$
Together with $C_i^{(t-1)}\neq\emptyset$ and the same witness condition on the opposite side at time $s-1$, the same step
$s$ witnesses $i\in\Live(t-1)$, a contradiction.
\end{claimproof}

\begin{claim}\label{clm:join-live-monotone}
If $\pi_t=\eta(p,q)$, then
$\Live(t)\subseteq \Live(t-1).$
\end{claim}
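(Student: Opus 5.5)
Fix $\ell\in\Live(t)$; we will show $\ell\in\Live(t-1)$. Since $\pi_t=\eta(p,q)$ neither adds vertices nor relabels, we have $V_t=V_{t-1}$ and $C_x^{(t)}=C_x^{(t-1)}$ for every $x\in[w]$. Moreover $f_t=\mathrm{id}_{[w]}$ in \Cref{def:label-evolution}, so $\tau_{t-1,s-1}=\tau_{t,s-1}$ for every $s-1\ge t$. In particular $C_\ell^{(t-1)}=C_\ell^{(t)}\neq\emptyset$.

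Now let $s\in\{t+1,\dots,L\}$ be a witness step for $\ell$ at time $t$. Then $\pi_s=\eta(p',q')$, and without loss of generality $\tau_{t,s-1}(\ell)=p'$ and $C_{q'}^{(s-1)}\neq\emptyset$. The witness condition only involves $\tau_{\cdot,s-1}(\ell)$ and the label class $C_{q'}^{(s-1)}$ at the fixed time $s-1$. Replacing $\tau_{t,s-1}$ by $\tau_{t-1,s-1}$ leaves both unchanged, and $s\in\{t,\dots,L\}$ is an admissible index at time $t-1$. So the same step $s$ witnesses $\ell\in\Live(t-1)$ in \Cref{def:live-label}.

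The only point needing care is the identity $\tau_{t-1,s-1}=\tau_{t,s-1}$, including the edge case $s-1=t$. Here $\tau_{t-1,t}=f_t=\mathrm{id}=\tau_{t,t}$. For $s-1>t$, the identity follows from $\tau_{t-1,s-1}=\tau_{t,s-1}\circ f_t$. With this checked, the argument is a direct unfolding of definitions, in the same style as \Cref{clm:live-stability-add}.
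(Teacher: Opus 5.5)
Your proposal is correct and follows essentially the same route as the paper: $C_\ell^{(t-1)}=C_\ell^{(t)}\neq\emptyset$, and $\tau_{t-1,s-1}=\tau_{t,s-1}$ because $f_t=\mathrm{id}_{[w]}$, so the same witness step $s$ shows $\ell\in\Live(t-1)$. Your explicit check of the edge case $s-1=t$, via $\tau_{t-1,t}=f_t=\mathrm{id}=\tau_{t,t}$, is a small detail the paper leaves implicit.
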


\begin{claimproof}
Fix $\ell\in\Live(t)$.
Then $C_\ell^{(t)}\neq\emptyset$ and there exists $s>t$ witnessing liveness for $\ell$ at time $t$.
The operation $\eta(p,q)$ does not change labels or label classes, so
$
C_\ell^{(t-1)}=C_\ell^{(t)}\neq\emptyset.
$
Moreover, since $\eta(p,q)$ does not affect label evolution,
$
\tau_{t-1,s-1}(\ell)=\tau_{t,s-1}(\ell).
$
Thus the same witness $s$ also shows $\ell\in\Live(t-1)$.
\end{claimproof}

\begin{claim}\label{clm:ren-live-stability}
If $\pi_t=\rho(p\to q)$, then for every $\ell\in[w]\setminus\{p,q\}$ we have
\[
\ell\in\Live(t) \text{ if and only if } \ell\in\Live(t-1).
\]
\end{claim}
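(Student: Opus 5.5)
Fix $\ell\in[w]\setminus\{p,q\}$. The plan follows Claims~\ref{clm:live-stability-add} and~\ref{clm:join-live-monotone}: compare, condition by condition, the definition of liveness (\Cref{def:live-label}) at times $t-1$ and $t$. The key observation is that the relabel operation $\rho(p\to q)$ touches only the classes $C_p$ and $C_q$. It leaves the class $C_\ell$ unchanged, so $C_\ell^{(t)}=C_\ell^{(t-1)}$. In particular, the non-emptiness requirement holds at time $t$ if and only if it holds at time $t-1$.

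Next I compare label evolutions. By \Cref{def:label-evolution}, $\tau_{t-1,s-1}=\tau_{t,s-1}\circ r_{p\to q}$ for every $s-1\ge t$. Since $\ell\ne p$, we have $r_{p\to q}(\ell)=\ell$, hence
\[
\tau_{t-1,s-1}(\ell)=\tau_{t,s-1}(\ell)\quad\text{for all } s\ge t+1 .
\]
The side condition ``$C_q^{(s-1)}\neq\emptyset$'' (or its $p$-version) refers only to the fixed intermediate graph $\mathbf F_{s-1}$, so it does not depend on the reference time. Consequently, any witness step $s\ge t+1$ for $\ell$ at time $t$ is also a witness for $\ell$ at time $t-1$, and vice versa.

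The remaining point, which I expect to be the only subtle one, is the extra candidate witness $s=t$ that is available at time $t-1$ but not at time $t$. This case is vacuous: a witness step must be a join $\eta(\cdot,\cdot)$, while $\pi_t=\rho(p\to q)$ is a relabel. Hence the witness sets coincide, and the non-emptiness conditions agree as shown above, which gives the equivalence. No appeal to \Cref{lem:tau-correct} is needed beyond the definitional identity for $\tau$.
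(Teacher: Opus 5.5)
Your proposal is correct and follows essentially the same route as the paper: both note that $C_\ell^{(t)}=C_\ell^{(t-1)}$ and that $\tau_{t-1,s-1}(\ell)=\tau_{t,s-1}(\ell)$ for all $s\ge t+1$, so the witness join steps coincide. You go slightly further by deriving the latter from $\tau_{t-1,s-1}=\tau_{t,s-1}\circ r_{p\to q}$ and by explicitly ruling out $s=t$ as a witness because $\pi_t$ is a relabel rather than a join, a point the paper leaves implicit.
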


\begin{claimproof}
Fix $\ell\notin\{p,q\}$.
The relabel $\rho(p\to q)$ does not affect $C_\ell$ and does not change the label evolution of vertices in $C_\ell$.
Hence $C_\ell^{(t)}\neq\emptyset$ if and only if $C_\ell^{(t-1)}\neq\emptyset$, and for every $s\ge t+1$ we have
$
\tau_{t,s-1}(\ell)=\tau_{t-1,s-1}(\ell).
$
Therefore $\ell$ has a witness join step after $t$ if and only if it has one after $t-1$.
\end{claimproof}

\begin{claim}\label{clm:q-becomes-live-empty}
If $\pi_t=\rho(p\to q)$ and $q\in\Live(t)$ but $q\notin\Live(t-1)$, then
$
C_q^{(t-1)}=\emptyset.
$
\end{claim}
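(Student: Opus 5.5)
We argue by contradiction, following the pattern of \Cref{clm:add-live}. Suppose $\pi_t=\rho(p\to q)$, $q\in\Live(t)$, $q\notin\Live(t-1)$, and assume $C_q^{(t-1)}\neq\emptyset$. Since $q\in\Live(t)$, there is a witness step $s>t$ with $\pi_s=\eta(p',q')$. Without loss of generality $\tau_{t,s-1}(q)=p'$ and $C_{q'}^{(s-1)}\neq\emptyset$; the other case is symmetric. The plan is to show that this same step $s$ witnesses $q\in\Live(t-1)$, which contradicts the assumption.

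The key identity is $\tau_{t-1,s-1}(q)=\tau_{t,s-1}(q)$. By \Cref{def:label-evolution}, $\tau_{t-1,s-1}=\tau_{t,s-1}\circ f_t$ with $f_t=r_{p\to q}$. Since $p\neq q$, we have $r_{p\to q}(q)=q$, so
\[
\tau_{t-1,s-1}(q)=\tau_{t,s-1}(r_{p\to q}(q))=\tau_{t,s-1}(q)=p'.
\]
The condition $C_{q'}^{(s-1)}\neq\emptyset$ refers only to time $s-1$, so it holds unchanged. Together with the assumption $C_q^{(t-1)}\neq\emptyset$, every requirement of \Cref{def:live-label} for $q$ at time $t-1$ is met with witness $s$. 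Note that $s\ge t+1>t-1$, so $s$ lies in the allowed range.

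Hence $q\in\Live(t-1)$, a contradiction, and therefore $C_q^{(t-1)}=\emptyset$. There is no real obstacle here. The only point needing care is the composition order in $\tau$: one must confirm that the relabel at step $t$ fixes $q$, which uses $p\neq q$. Alternatively, one can see this directly from \Cref{lem:tau-correct}: vertices of label $q$ at time $t-1$ still have label $q$ at time $t$, and from then on they follow the same label evolution as the rest of $C_q^{(t)}$.
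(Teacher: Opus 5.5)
Your proof is correct and is the paper's argument: assuming $C_q^{(t-1)}\neq\emptyset$, the witness step $s$ for $q\in\Live(t)$ also witnesses $q\in\Live(t-1)$, because $\tau_{t-1,s-1}(q)=\tau_{t,s-1}(q)$. You derive that identity from $\tau_{t-1,s-1}=\tau_{t,s-1}\circ r_{p\to q}$ and $r_{p\to q}(q)=q$ (using $p\neq q$), which spells out what the paper states informally.
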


\begin{claimproof}
Suppose for a contradiction that $C_q^{(t-1)}\neq\emptyset$.
Since $q\in\Live(t)$, let $s>t$ be a witness join step for $q$ at time $t$.
The relabel $\rho(p\to q)$ does not change the label of vertices already in $C_q^{(t-1)}$, hence for those vertices
$
\tau_{t-1,s-1}(q)=\tau_{t,s-1}(q).
$
Together with $C_q^{(t-1)}\neq\emptyset$ and the same witness condition on the opposite side, the same step $s$ witnesses
$q\in\Live(t-1)$, a contradiction.
\end{claimproof}

\begin{claim}\label{clm:relabel-live-source}
If $\pi_t=\rho(p\to q)$, $q\in\Live(t)$, and $C_p^{(t-1)}\neq\emptyset$, then
$
p\in\Live(t-1).
$
\end{claim}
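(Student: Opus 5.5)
The plan is to transport the liveness witness of $q$ at time $t$ back to time $t-1$, replacing $q$ by $p$. Since $q\in\Live(t)$, by \Cref{def:live-label} there is a step $s\in\{t+1,\dots,L\}$ with $\pi_s=\eta(p',q')$, $p'\neq q'$, such that either $\tau_{t,s-1}(q)=p'$ and $C_{q'}^{(s-1)}\neq\emptyset$, or $\tau_{t,s-1}(q)=q'$ and $C_{p'}^{(s-1)}\neq\emptyset$. By symmetry we may assume the first alternative. We will show that this same step $s$ witnesses $p\in\Live(t-1)$. The nonemptiness requirement $C_p^{(t-1)}\neq\emptyset$ is exactly the hypothesis, so only the label-evolution condition needs checking.

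The key computation is the identity $\tau_{t-1,s-1}(p)=\tau_{t,s-1}(q)$. By \Cref{def:label-evolution}, since $s-1\ge t>t-1$, we have
\[
\tau_{t-1,s-1}=\tau_{t,s-1}\circ f_t ,
\]
with the convention $\tau_{t,t}=\mathrm{id}_{[w]}$ covering the case $s-1=t$. Because $\pi_t=\rho(p\to q)$, we have $f_t=r_{p\to q}$, so $f_t(p)=q$. Hence $\tau_{t-1,s-1}(p)=\tau_{t,s-1}(q)=p'$. Alternatively, we can argue semantically via \Cref{lem:tau-correct}: any vertex $v\in C_p^{(t-1)}$ lies in $C_q^{(t)}$, and therefore lies in $C^{(s-1)}_{\tau_{t,s-1}(q)}$. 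The opposite-side condition $C_{q'}^{(s-1)}\neq\emptyset$ refers only to time $s-1$, so it carries over unchanged. Together these give exactly the first alternative of \Cref{def:live-label} for label $p$ at time $t-1$, with witness step $s$, which lies in $\{t,\dots,L\}$ as required.

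The only step needing care is the bookkeeping of the composition order and the boundary case $s=t+1$. In that case $\tau_{t,s-1}=\tau_{t,t}=\mathrm{id}$ and $\tau_{t-1,t}=r_{p\to q}$, and the identity still holds. There is no real obstacle. I will also note that the symmetric second alternative is handled identically, with the roles of $p'$ and $q'$ swapped.
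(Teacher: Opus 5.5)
Your proposal is correct and matches the paper's proof: you reuse the witness join step $s$ for $q$ at time $t$, and from $\tau_{t-1,s-1}=\tau_{t,s-1}\circ r_{p\to q}$ you get $\tau_{t-1,s-1}(p)=\tau_{t,s-1}(q)$. The opposite-side nonemptiness at time $s-1$ and the hypothesis $C_p^{(t-1)}\neq\emptyset$ then carry over unchanged. Your explicit handling of the composition order and of the boundary case $s=t+1$ simply spells out what the paper compresses into ``by the definition of the evolution maps.''
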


\begin{claimproof}
Let $q\in\Live(t)$ and let $s>t$ be a witness join step for $q$ at time $t$.
Choose any vertex $v\in C_p^{(t-1)}$.
After applying $\rho(p\to q)$ at step $t$, the vertex $v$ belongs to $C_q^{(t)}$.
Hence at time $s-1$ its label equals $\tau_{t,s-1}(q)$.
By the definition of the evolution maps,
$
\tau_{t-1,s-1}(p)=\tau_{t,s-1}(q).
$
Since $C_p^{(t-1)}\neq\emptyset$ and the witness requires the opposite label class at time $s-1$ to be nonempty, the same
step $s$ witnesses $p\in\Live(t-1)$.
\end{claimproof}

We are now ready to prove the main result of this section.

\begin{theorem}\label{thm:lcw-counting-2k-2}
There exists an algorithm that, given an integer $k\ge 1$ and an $n$-vertex graph $G$ together with a linear $w$-expression constructing $G$, computes the number of equitable $k$-colorings of $G$ in time
$\max\{1,2^k-2\}^w\cdot n^{k+O(1)}$.
In particular, for every fixed integer $k$, counting equitable $k$-colorings is polynomial-time solvable on graph classes of bounded linear clique-width, given a linear clique-width expression.
\end{theorem}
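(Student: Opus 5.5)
We process the linear expression $\Pi=\pi_1\cdots\pi_L$ from left to right and maintain, for each time $t$, a table indexed by pairs $(\mathbf{S},\mathbf{a})$. Here $\mathbf{S}$ assigns to every live label $i\in\Live(t)$ a set $S_i$ with $\emptyset\neq S_i\subsetneq[k]$. Labels outside $\Live(t)$ are simply forgotten. The vector $\mathbf{a}\in\{0,\dots,n\}^k$ records the color-class sizes. The entry $\mathrm{cnt}_t[\mathbf{S},\mathbf{a}]$ counts proper $k$-colorings $\varphi_t$ of $F_t$ that satisfy three conditions: they are \emph{live-consistent}, meaning that no future join between live label classes would be violated is not yet required, only that $S_i(\varphi_t)=S_i$ for all $i\in\Live(t)$; their size vector is $\mathbf{a}$; and they are compatible with all joins already performed, which holds because $\varphi_t$ is proper. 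By \Cref{lem:live-proper-subset}, every extendible coloring has $S_i\notin\{\emptyset,[k]\}$ on live labels. Colorings whose live label sets include $[k]$ are never extendible, so they may be discarded: any such coloring, and any extension of it, contributes nothing to the final count. This gives at most $(2^k-2)^{|\Live(t)|}\le\max\{1,2^k-2\}^w$ choices of $\mathbf{S}$ (for $k=1$ no label can be live with a nonempty opposite class, so $\Live(t)=\emptyset$). Multiplied by $(n+1)^k$ choices of $\mathbf{a}$, this is the claimed table size.

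Transitions are handled as follows. Note first that non-live labels never again take part in a join with a nonempty opposite side, so the colors on them never matter again. They matter neither directly nor after merging via relabels, since a merged class inherits liveness only through the relation $\tau$. For $\pi_t=\mathsf{add}(i)$, we choose the color $c$ of the new vertex and increment $a_c$. If $i\in\Live(t)$, we set $S_i\leftarrow S_i\cup\{c\}$ when $i\in\Live(t-1)$, and $S_i\leftarrow\{c\}$ otherwise; the latter is justified by \Cref{clm:add-live}, which says $C_i^{(t-1)}=\emptyset$. Other labels are unchanged by \Cref{clm:live-stability-add}. For $\pi_t=\eta(p,q)$, we keep only entries with $S_p\cap S_q=\emptyset$ whenever both $C_p^{(t-1)}$ and $C_q^{(t-1)}$ are nonempty; in that case both $p,q\in\Live(t-1)$ by definition, so their sets are stored. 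We then project to $\Live(t)\subseteq\Live(t-1)$ (\Cref{clm:join-live-monotone}). For $\pi_t=\rho(p\to q)$, the new set is $S_q\leftarrow S_p\cup S_q$. We use \Cref{clm:relabel-live-source} to see that $S_p$ was stored if $C_p^{(t-1)}\neq\emptyset$, and \Cref{clm:q-becomes-live-empty} to treat an unstored $S_q$ as empty. The remaining labels are handled by \Cref{clm:ren-live-stability}. After every step we drop entries whose live sets equal $[k]$ and project away labels that have stopped being live.

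At the end we have $\Live(L)=\emptyset$. We sum $\mathrm{cnt}_L[\emptyset,\mathbf{a}]$ over vectors $\mathbf{a}$ that satisfy the condition of \Cref{lem:root-condition-cw}. Correctness is proved by induction on $t$ with the following invariant: the table counts, grouped by live signature, exactly those proper colorings of $F_t$ whose live sets avoid $[k]$. Since dropping is justified only by non-extendibility, the final count equals the number of proper, and hence equitable, colorings of $F_L=G$.

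The main obstacle is the running time: obtaining the base $2^k-2$ and the exponent $k+O(1)$ requires that every transition cost $\mathrm{poly}(n)$ per entry. Add and join steps touch $O(k)$ successors per entry. For relabel steps, I would push each source entry forward to its unique image rather than enumerating preimages; that is, I would write every DP step as a forward pass. The total cost is then $L\cdot\max\{1,2^k-2\}^w(n+1)^k\cdot\mathrm{poly}(n,k)$ arithmetic operations on $O(n\log k)$-bit numbers. Together with $O(L^2w^2)$ preprocessing from \Cref{clm:compute-live}, and assuming $L$ is polynomial in $n$, this gives $\max\{1,2^k-2\}^w n^{k+O(1)}$. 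The delicate point to check carefully is that the projection bookkeeping for relabels is always consistent: whenever a label becomes live, its old content was either empty or already stored.
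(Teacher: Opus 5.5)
Your proposal is correct and follows the paper's proof essentially step for step. It uses the same states (color sets in $2^{[k]}\setminus\{\emptyset,[k]\}$ on live labels only, plus the color-count vector), the same add/join/relabel updates justified by the same liveness claims, the same discarding argument via \Cref{lem:live-proper-subset}, and the same summation at time $L$. One remark is wrong, though it does not affect the result: for $k=1$ labels can still be live, since liveness depends only on the expression. The bound survives because there are at most $0^{|\Live(t)|}\le 1$ label-set choices; the paper instead handles $k=1$ separately.
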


\begin{proof}
We first handle the case $k=1$ separately.
A proper $1$-coloring of $G$ exists if and only if $G$ has no edges, and in that case it is unique.
Thus the number of equitable $1$-colorings is $1$ if $E(G)=\emptyset$, and $0$ otherwise.
This can be checked in polynomial time from the given linear expression.
In the remainder of the proof, assume that $k\ge 2$.
Let $\Pi=\pi_1\pi_2\cdots \pi_L$ be the given linear $w$-expression, where $L$ is its length.
Let $\mathbf{F}_t=(F_t;C_1^{(t)},\dots,C_w^{(t)})$, for $t=0,1,\dots,L$, be the intermediate labelled graphs,
with $F_0$ empty and $F_L=G$.
Let $V_t\coloneqq V(F_t)$ and $n_t\coloneqq |V_t|$.
By Claim~\ref{clm:compute-live}, we may assume that $\Live(t)$ is known for all $t$.
We use a distinguished symbol $\bot$ to denote that a label is not tracked.
For each $t\in\{0,\dots,L\}$, a \emph{state at time $t$} is a pair $\sig=(\mathbf{S},\mathbf{a})$ where
\[
\mathbf{a}=(a_1,\dots,a_k)\in\{0,1,\dots,n_t\}^k,
\qquad
\sum_{c=1}^k a_c = n_t,
\]
and $\mathbf{S}=(S_1,\dots,S_w)$ satisfies
\[
S_i =
\begin{cases}
\bot & \text{if } i\notin \Live(t),\\[1mm]
\text{an element of }2^{[k]}\setminus\{\emptyset,[k]\} & \text{if } i\in \Live(t).
\end{cases}
\]

For a proper $k$-coloring $\varphi_t:V_t\to[k]$ of $F_t$, its \emph{projected signature at time $t$} is the pair
$\sig=(\mathbf{S},\mathbf{a})$ defined by
\[
a_c \coloneqq  |\varphi_t^{-1}(c)| \quad (c\in[k]),
\qquad
S_i \coloneqq 
\begin{cases}
\{c\in[k]:\exists v\in C_i^{(t)} \text{ with }\varphi_t(v)=c\} & \text{if } i\in\Live(t),\\
\bot & \text{if } i\notin\Live(t).
\end{cases}
\]

We now construct the DP table.
For each $t$ and each state $\sig$ at time $t$, let
\[
\DP[t,\sig]\in\mathbb{Z}_{\ge 0}
\]
be the number of proper $k$-colorings $\varphi_t$ of $F_t$ whose projected signature equals $\sig$.

We do not store colorings whose projected signature is not a state at time $t$; equivalently, we discard colorings for
which some live label uses all $k$ colors.
By~\Cref{lem:live-proper-subset}, such colorings are not extendible to a proper coloring of $F_L$, and hence
discarding them cannot affect the final count.
Moreover, once a coloring is discarded for this reason, it can never later contribute to a valid state:
if some live label uses all $k$ colors at time $t$, then under subsequent add and relabel operations the corresponding
live descendant label still uses all $k$ colors, while when the witnessing future join is performed the coloring becomes
improper.

At $t=0$ the graph is empty, so $\Live(0)=\emptyset$.
There is exactly one proper coloring, namely the empty map, whose projected signature is the unique state $\sig_0$ with
$\mathbf{a}=\mathbf{0}$ and $S_i=\bot$ for all $i$.
Set
$\DP[0,\sig_0]=1$ and 
$\DP[0,\sig]=0 \text{ for all other states }\sig.$

Assume that $\DP[t-1,\cdot]$ has been computed.
Before processing step $t$, initialize
$\DP[t,\sig]\coloneqq 0$
for every state $\sig$ at time $t$.
We now describe the DP update from time $t-1$ to time $t$, depending on the operation $\pi_t$.

\paragraph{Case 1: $\pi_t=\mathsf{add}(i)$.}
Let $v$ be the new vertex, so
$
V_t=V_{t-1}\uplus\{v\}$ and 
$v\in C_i^{(t)}.$
Fix a state $\sig^-=(\mathbf{S}^-,\mathbf{a}^-)$ with $\DP[t-1,\sig^-]>0$.
For each color $c\in[k]$, define $\mathbf{a}^+$ by
$a_c^+=a_c^-+1$ and 
$a_{c'}^+=a_{c'}^- \text{ for } c'\neq c$.
Define $\mathbf{S}^+$ by:
\begin{itemize}
\item for every $\ell\in\Live(t)\setminus\{i\}$, set
$
S_\ell^+\coloneqq S_\ell^-,
$
which is well-defined by Claim~\ref{clm:live-stability-add};
\item for every $\ell\notin\Live(t)$, set
$
S_\ell^+\coloneqq \bot;
$
\item for label $i$, set
\[
S_i^+\coloneqq 
\begin{cases}
S_i^- \cup \{c\} & \text{if } i\in\Live(t)\cap\Live(t-1),\\
\{c\} & \text{if } i\in\Live(t)\setminus\Live(t-1),\\
\bot & \text{if } i\notin\Live(t),
\end{cases}
\]
where the second case is valid by Claim~\ref{clm:add-live}.
\end{itemize}
If $i\in\Live(t)$ and $S_i^+=[k]$, discard this update.
Otherwise set
\[
\DP[t,(\mathbf{S}^+,\mathbf{a}^+)]
\coloneqq 
\DP[t,(\mathbf{S}^+,\mathbf{a}^+)]
+
\DP[t-1,(\mathbf{S}^-,\mathbf{a}^-)].
\]

This is correct since proper colorings of $F_t$ are in bijection with pairs consisting of a proper coloring of
$F_{t-1}$ and a color for the new isolated vertex $v$, and the projected signature changes exactly as above.

\paragraph{Case 2: $\pi_t=\eta(p,q)$ with $p\neq q$.}
If $C_p^{(t-1)}=\emptyset$ or $C_q^{(t-1)}=\emptyset$, then $F_t=F_{t-1}$.
By Claim~\ref{clm:join-live-monotone}, we have
$
\Live(t)\subseteq\Live(t-1).
$
For each state $\sig^-=(\mathbf{S}^-,\mathbf{a})$, define $\sig^+=(\mathbf{S}^+,\mathbf{a})$ by
\[
S_\ell^+=S_\ell^- \text{ for } \ell\in\Live(t),
\qquad
S_\ell^+=\bot \text{ otherwise},
\]
and update
\[
\DP[t,\sig^+] \coloneqq  \DP[t,\sig^+] + \DP[t-1,\sig^-].
\]
This simply projects the same proper coloring onto the smaller live set.

Assume now that $C_p^{(t-1)}\neq\emptyset$ and $C_q^{(t-1)}\neq\emptyset$.
Then $p,q\in\Live(t-1)$, because the current join step itself witnesses liveness at time $t-1$.
Fix a state $\sig^-=(\mathbf{S}^-,\mathbf{a})$ with $\DP[t-1,\sig^-]>0$.
If
$
S_p^-\cap S_q^-\neq\emptyset,
$
add nothing.
If
$
S_p^-\cap S_q^-=\emptyset,
$
define $\sig^+=(\mathbf{S}^+,\mathbf{a})$ by
\[
S_\ell^+=S_\ell^- \text{ for } \ell\in\Live(t),
\qquad
S_\ell^+=\bot \text{ otherwise},
\]
and update
\[
\DP[t,\sig^+] \coloneqq  \DP[t,\sig^+] + \DP[t-1,\sig^-].
\]
Indeed, when both label classes are nonempty, a proper coloring survives the join if and only if no color is used on
both sides, that is, if and only if $S_p^-\cap S_q^-=\emptyset$.

\paragraph{Case 3: $\pi_t=\rho(p\to q)$ with $p\neq q$.}
Fix a state $\sig^-=(\mathbf{S}^-,\mathbf{a})$ with $\DP[t-1,\sig^-]>0$.
The relabel operation does not change the graph $F_{t-1}$, only the labels, so the color-count vector remains
$\mathbf{a}$.
For every $\ell\in\Live(t)\setminus\{q\}$, set
$S_\ell^+\coloneqq S_\ell^-.$
This is well-defined for $\ell\notin\{p,q\}$ by Claim~\ref{clm:ren-live-stability}, and label $p$ cannot belong to
$\Live(t)$ because after the relabel its class is empty.
For every $\ell\notin\Live(t)$, set
$S_\ell^+\coloneqq \bot.$

For label $q$, define
\[
S_q^+\coloneqq 
\begin{cases}
\bot & \text{if } q\notin\Live(t),\\[1mm]
S_q^- \cup S_p^- & \text{if } q\in\Live(t)\cap\Live(t-1)\text{ and }p\in\Live(t-1),\\[1mm]
S_q^- & \text{if } q\in\Live(t)\cap\Live(t-1)\text{ and }p\notin\Live(t-1),\\[1mm]
S_p^- & \text{if } q\in\Live(t)\setminus\Live(t-1).
\end{cases}
\]
These cases are exhaustive: if $q\notin\Live(t)$, we are in the first case; otherwise either
$q\in\Live(t-1)$ or $q\notin\Live(t-1)$, and in the former situation either $p\in\Live(t-1)$ or
$p\notin\Live(t-1)$.
In the third case, if $C_p^{(t-1)}\neq\emptyset$, then Claim~\ref{clm:relabel-live-source} would imply
$p\in\Live(t-1)$, a contradiction.
Hence $C_p^{(t-1)}=\emptyset$, so no vertices are moved from label $p$ to label $q$, and therefore
$S_q^+=S_q^-$.
In the last case, Claim~\ref{clm:q-becomes-live-empty} gives
$C_q^{(t-1)}=\emptyset$.
Since $q\in\Live(t)$, we have $C_q^{(t)}\neq\emptyset$ by definition of liveness.
Because $\rho(p\to q)$ relabels all vertices of $C_p^{(t-1)}$ to label $q$, we have
$C_q^{(t)}=C_q^{(t-1)}\cup C_p^{(t-1)}=C_p^{(t-1)}$,
and therefore $C_p^{(t-1)}\neq\emptyset$.
Now Claim~\ref{clm:relabel-live-source} implies $p\in\Live(t-1)$, so $S_p^-$ is defined.

If $q\in\Live(t)$ and $S_q^+=[k]$, discard this update.
Otherwise set
$\DP[t,(\mathbf{S}^+,\mathbf{a})]
:=
\DP[t,(\mathbf{S}^+,\mathbf{a})]
+
\DP[t-1,(\mathbf{S}^-,\mathbf{a})].
$

This is correct because relabeling induces a bijection between proper colorings before and after the operation, and the
projected signature changes exactly as above.
Any update yielding $S_q^+=[k]$ can be discarded by~\Cref{lem:live-proper-subset}, since such a coloring cannot
extend to a proper coloring of $F_L$.

\medskip

This completes the case analysis for the DP update.
By induction on $t$, for every state $\sig$ at time $t$, the table value $\DP[t,\sig]$ equals the number of proper
$k$-colorings of $F_t$ whose projected signature equals $\sig$.

At time $L$, we have $\Live(L)=\emptyset$, so every state has the form
$
(\bot,\dots,\bot;\mathbf{a}),
$
and the projected signature records only the color-class sizes.
For $n=kq+r$ with $0\le r<k$, the number of equitable proper $k$-colorings of $G=F_L$ is obtained by summing
$\DP[L,\sig]$ over all states whose vector $\mathbf a$ satisfies
$a_c\in\{q,q+1\}$ with $c\in[k]$
and exactly $r$ coordinates are equal to $q+1$.

For the running time, first consider the case $k\ge 2$.
At each time $t$, there are at most
$(2^k-2)^{|\Live(t)|}\le (2^k-2)^w$
choices for the tracked label-sets and at most $(n+1)^k$ choices for $\mathbf{a}$, so the number of states is at most
$(2^k-2)^w(n+1)^k.$
Each state update requires only polynomially many operations on labels and on subsets of $[k]$.
Representing subsets of $[k]$ by $k$-bit vectors, every union, intersection, equality test, and membership test can be carried out in time polynomial in $k$.
Hence each transition can be processed in time polynomial in $k$ and $w$.
Since a linear $w$-expression that constructs an $n$-vertex graph uses at most $n$ labels, we have $w\le n$, and therefore every polynomial factor in $w$ is absorbed into $n^{O(1)}$.
Moreover, for $k\ge 2$ and $w\ge 1$, the factor $(2^k-2)^w$ dominates every polynomial factor in $k$.
Thus, up to polynomial factors in $n$, the total running time is
$(2^k-2)^w(n+1)^k=
(2^k-2)^w\cdot n^{k+O(1)}.$
Together with the polynomial-time treatment of the case $k=1$, this yields the claimed bound
$\max\{1,2^k-2\}^w\cdot n^{k+O(1)}$. This completes the proof of~\Cref{thm:lcw-counting-2k-2}.
\end{proof}

\begin{corollary}\label{cor:lcw-decision-2k-2}
There exists an algorithm that, given an integer $k\ge 1$ and an $n$-vertex graph $G$ together with a linear $w$-expression producing $G$, decides whether $G$ admits an equitable $k$-coloring in time
$\max\{1,2^k-2\}^w\cdot n^{k+O(1)}$.
In particular, for every fixed integer $k$, deciding whether a graph admits an equitable $k$-coloring is polynomial-time solvable on graph classes of bounded linear clique-width, given a linear clique-width expression.
\end{corollary}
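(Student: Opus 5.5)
The corollary follows directly from \Cref{thm:lcw-counting-2k-2}, in the same way that \Cref{cor:cw-decision} follows from \Cref{thm:cw-counting}. Given $k\ge 1$, the graph $G$, and a linear $w$-expression $\Pi$ producing $G$, we run the counting algorithm of \Cref{thm:lcw-counting-2k-2} on $(k,G,\Pi)$. We accept if and only if the returned number of equitable $k$-colorings is nonzero.

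Correctness is immediate. $G$ admits an equitable $k$-coloring exactly when the set of equitable proper $k$-colorings of $G$ is nonempty, that is, exactly when its cardinality is positive. \Cref{thm:lcw-counting-2k-2} returns this cardinality exactly, since the root condition of \Cref{lem:root-condition-cw} is the one used to sum $\DP[L,\cdot]$.

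For the running time, the counting algorithm takes $\max\{1,2^k-2\}^w\cdot n^{k+O(1)}$ time. The final zero test is done on an integer of at most $n\log_2 k$ bits. The counts are bounded by $k^n$, and $\log k\le k\le n^{O(1)}$ may be assumed, since for $k>n$ the answer is trivial: some color class is empty, so equitability forces every class to have size at most one, and the instance reduces to checking $E(G)=\emptyset$. Hence the zero test costs only a polynomial factor. The "in particular'' statement follows because for fixed $k$ and bounded $w$ the bound is polynomial in $n$. There is no real obstacle here; the only care point is that bit-lengths of the counts add at most a polynomial factor, which is absorbed into $n^{O(1)}$.
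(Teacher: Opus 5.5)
Your main argument is the paper's proof: run the counting algorithm of \Cref{thm:lcw-counting-2k-2} and accept if and only if the returned count is nonzero.

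However, your side remark about the case $k>n$ is wrong. You correctly note that some color class is then empty, so every class has size at most one. But such a coloring is injective and therefore automatically proper. So for $k>n$ the answer is always YES: there are exactly $k(k-1)\cdots(k-n+1)$ equitable $k$-colorings. The instance does not reduce to checking $E(G)=\emptyset$; for example, $G=K_2$ with $k=3$ is a YES-instance with an edge.

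The remark is also unnecessary. The zero test costs no more than writing down the count, which the counting algorithm already does within its stated bound. Concretely, the count has at most $n\log_2 k+1\le nk+1$ bits, and for $k\ge 2$, $w\ge 1$ we have $k\le 2^k-2\le(2^k-2)^w$, so this is absorbed. Simply drop the $k>n$ discussion, or correct it to ``the answer is trivially YES''.
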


\begin{proof}
Run the algorithm of~\Cref{thm:lcw-counting-2k-2} to compute the number of equitable proper $k$-colorings of $G$
and accept if and only if the computed number is nonzero.
\end{proof}

\section{Graphs with no long induced paths: The Proof of~\Cref{thm:PT-restate}}\label{sec:pt}

The goal of this section is to prove the following.

\begin{theorem}\label{thm:PT}
Let $t\in \mathbb{N}$. There is an algorithm that, given a $P_t$-free graph $G$ on $n$ vertices and a list assignment
$L\colon V(G)\to 2^{\{1,2,3\}}$, computes the number of equitable list $3$-colorings of $(G,L)$ in time
$2^{O(\sqrt{n\log n})}\cdot \mathrm{poly}(n)\footnote{Since $t$ is fixed, the factor depending on $t$ is absorbed in the $O(\cdot)$-notation.}$.
\end{theorem}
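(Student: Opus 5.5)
We will follow the branching scheme of Groenland et al.~\cite{groenland2019h} for counting list $3$-colorings of $P_t$-free graphs, and augment every leaf computation with a record of the color-class sizes, exactly as in the signature vector $\mathbf a$ of~\Cref{def:X-signature}. Concretely, we design a recursive procedure $\mathrm{Count}(G,L)$ that returns the full generating polynomial
\[
P_{G,L}(x_1,x_2,x_3)=\sum_{\varphi}x_1^{|\varphi^{-1}(1)|}x_2^{|\varphi^{-1}(2)|}x_3^{|\varphi^{-1}(3)|},
\]
where the sum runs over all proper list $3$-colorings $\varphi$ of $(G,L)$. Equivalently, it returns a table indexed by $\mathbf a\in\{0,\dots,n\}^3$, which has only $O(n^3)$ entries of $O(n)$ bits each. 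The number of equitable colorings is then read off at the end by summing the coefficients with $a_c\in\{q,q+1\}$ and exactly $r$ entries equal to $q+1$, as in~\Cref{lem:root-condition-cw}.

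The procedure first does the standard simplifications.
\begin{itemize}
\item Vertices with $|L(v)|\le 1$ are handled directly. If $L(v)=\emptyset$ we return $0$. Otherwise we fix the color of $v$, delete that color from the lists of its neighbours, remove $v$, and multiply the result by $x_c$.
\item If the graph is disconnected, we recurse on each component and multiply the resulting polynomials, since color counts add across components.
\item If every list has size at most $2$, the instance is a $2$-SAT-like structure. We would count it by decomposing into components of the "same list, forced alternation" graph, following Groenland et al. Each such component has at most two colorings, each with a known count vector, so the component polynomials multiply together. Here we must check that their counting argument for this case factorizes over pieces; if it does not factorize cleanly, we instead run a polynomial DP over those pieces.
\end{itemize}
Otherwise we branch on a high-degree vertex. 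If some vertex $v$ with $|L(v)|=3$ has degree at least $\sqrt{n\log n}$ within the set of $3$-list vertices, we branch on its color. In two of the three branches its neighbours lose a color, so the measure (for instance the number of $3$-list vertices, weighted) drops by about $\sqrt{n\log n}$. The polynomial for the instance is the sum over the branches of the branch polynomials, weighted by $x_c$.

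The main obstacle is the remaining case: all $3$-list vertices have low degree but the instance is still large. Here Groenland et al.\ use the Gyárfás path argument to find a balanced separator, namely a set $X$ dominated by $O(t)$ vertices together with their low-degree neighbourhoods, so that $|X|=O(t\sqrt{n\log n})$ after a balancing step. We would guess the colors of these $O(t)$ dominating vertices, which removes at least one color from their neighbourhoods, and then recurse. What we must verify is that every operation they perform on colorings is either a disjoint case split, which corresponds to a sum of polynomials, or a split into independent parts, which corresponds to a product of polynomials. If both hold, their recursion-tree bound of $2^{O(\sqrt{n\log n})}$ leaves carries over unchanged. Each node then pays only a $\mathrm{poly}(n)$ overhead for multiplying trivariate polynomials of degree at most $n$, which gives the claimed $2^{O(\sqrt{n\log n})}\cdot\mathrm{poly}(n)$ time.

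If some step of their algorithm instead uses inclusion--exclusion or a non-multiplicative reduction, the plan is to make sure the reduction is applied coloring by coloring, so that it still commutes with the size-tracking.
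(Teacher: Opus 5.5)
Your final step (compute the trivariate generating polynomial of colour-class sizes, then sum the coefficients of the equitable monomials) is exactly the paper's. However, the algorithm you sketch for producing that polynomial has a genuine gap.

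\textbf{The base case.} The case ``all lists have size at most $2$'' cannot be handled by multiplying polynomials of ``same list, forced alternation'' components. Edges between vertices with \emph{different} $2$-lists couple those components without forcing anything. For instance, take a bipartite graph with $L(u)=\{1,2\}$ on one side and $L(v)=\{1,3\}$ on the other. Every such component is a single vertex with two colourings, yet the list colourings correspond bijectively to independent sets (the vertices coloured $1$). Counting these is \#P-hard on general graphs, so no generic polynomial DP over the pieces can replace the factorization. Reducing to $2$-lists and finishing with $2$-SAT is a decision-version device that does not survive counting.

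\textbf{The separator step.} This is also unsound as stated. Guessing the colours of the $O(t)$ vertices of a Gy\'arf\'as path $P$ only shrinks lists in $N(P)$; it does not remove $N[P]$, so the instance does not split into independent parts. Moreover, the bound $|N[P]|=O(t\sqrt{n\log n})$ would need low degree for every vertex with a list of size at least $2$. Your branching only controls degrees among $3$-list vertices.

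\textbf{What the paper does.} None of this reconstruction is needed. The result of Groenland et al.\ (\Cref{thm:subexpo-P_t-groen-etal}) is already stated for the generating function $p_{(G,L)\to H}$. It holds for every multigraph $H$ in which any two distinct vertices have at most one common neighbour. $K_3$ qualifies, since any two of its vertices have exactly one common neighbour. The paper therefore computes $p_{(G,L)\to K_3}(x_1,x_2,x_3)$ directly in time $2^{O(\sqrt{n\log n})}$. It then reads off the at most three relevant coefficients via \Cref{lem:eq-sum}, and handles $t\le 3$ by a trivial DP.

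Internally, that algorithm does not use any $2$-SAT step. It branches to bound the degree of all vertices with at least two available colours; this is where the common-neighbour condition on $H$ is used. It then runs a dynamic program over a path decomposition of width $O(t\Delta)$, which carries the polynomial naturally. The fix to your proposal is simply to invoke the cited theorem as a black box after checking its hypothesis for $K_3$.
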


We first need a few definitions. Recall that a list $3$-coloring $\varphi$ of $(G,L)$ is \emph{equitable} if for all
$i,j\in\{1,2,3\}$,
$
\bigl||\varphi^{-1}(i)|-|\varphi^{-1}(j)|\bigr|\le 1.
$
Equivalently, writing $n=3q+r$ with $r\in\{0,1,2\}$, the multiset
$\{|\varphi^{-1}(1)|,|\varphi^{-1}(2)|,|\varphi^{-1}(3)|\}$ equals
$\{q,q,q\}$ if $r=0$,
$\{q+1,q,q\}$ if $r=1$,
and $\{q+1,q+1,q\}$ if $r=2$.

Let $H$ be a multigraph and $G$ a graph.
A \emph{graph homomorphism} from $G$ to $H$ is a map $f\colon V(G)\to V(H)$ such that whenever $uv\in E(G)$,
we have $f(u)f(v)\in E(H)$.
Thus, a $3$-coloring is precisely a graph homomorphism to $K_3$.
A \emph{list $H$-coloring instance} is a pair $(G,L)$, where $L\colon V(G)\to 2^{V(H)}$ assigns to every
vertex $v\in V(G)$ a list $L(v)\subseteq V(H)$.
A \emph{list $H$-coloring} of $(G,L)$ is a graph homomorphism $f$ from $G$ to $H$ such that
$f(v)\in L(v)$ for all $v\in V(G)$.
Following~\cite{groenland2019h}, we denote the set of list $H$-colorings of $(G,L)$ by
$\mathcal{LC}((G,L),H)$.

For a given multigraph $H$, we define the partition polynomial
$p_{(G,L)\to H}(x)$ by
\begin{equation}\label{eq:partition-poly}
p_{(G,L)\to H}(x)
\coloneqq 
\sum_{f\in \mathcal{LC}((G,L),H)}\ \prod_{v\in V(G)} x_{f(v)}.
\end{equation}

We first note how equitable list $3$-colorings can be recovered from the coefficients of a suitable partition polynomial.
The strategy is to compute the polynomial $p_{(G,L)\to K_3}$ using~\Cref{thm:subexpo-P_t-groen-etal} and then extract from it exactly those coefficients corresponding to equitable color-class sizes.

\begin{lemma}\label{lem:coeff-counts}
Let $H=K_3$ and let
$p\coloneqq p_{(G,L)\to K_3}(x_1,x_2,x_3)$ be as in~\eqref{eq:partition-poly}.
For every triple $(a,b,c)\in\mathbb{Z}_{\ge 0}^3$ satisfying $a+b+c=n$, the coefficient of
$x_1^a x_2^b x_3^c$ in $p$ equals the number of list $3$-colorings $f$ of $(G,L)$ such that
$|f^{-1}(1)|=a$, $|f^{-1}(2)|=b$, and $|f^{-1}(3)|=c$.
\end{lemma}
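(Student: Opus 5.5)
The argument is a direct expansion of the definition~\eqref{eq:partition-poly} followed by grouping terms by monomial. First I will check that for every list $3$-coloring $f\in\mathcal{LC}((G,L),K_3)$ the product $\prod_{v\in V(G)} x_{f(v)}$ is a single monomial. Collecting equal factors, each vertex $v$ contributes exactly one factor $x_{f(v)}$ with $f(v)\in\{1,2,3\}$. Hence
\[
\prod_{v\in V(G)} x_{f(v)} \;=\; x_1^{|f^{-1}(1)|}\,x_2^{|f^{-1}(2)|}\,x_3^{|f^{-1}(3)|}.
\]
Since $V(G)=f^{-1}(1)\uplus f^{-1}(2)\uplus f^{-1}(3)$, the exponents are nonnegative integers summing to $n$.

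Next I will partition the index set $\mathcal{LC}((G,L),K_3)$ according to the size vector $\bigl(|f^{-1}(1)|,|f^{-1}(2)|,|f^{-1}(3)|\bigr)$. For a triple $(a,b,c)$ with $a+b+c=n$, let $N_{a,b,c}$ be the number of list $3$-colorings with exactly this size vector. Each such coloring contributes the monomial $x_1^ax_2^bx_3^c$ with coefficient $1$. This gives
\[
p_{(G,L)\to K_3}(x_1,x_2,x_3)=\sum_{\substack{(a,b,c)\in\mathbb{Z}_{\ge 0}^3\\ a+b+c=n}} N_{a,b,c}\,x_1^ax_2^bx_3^c .
\]

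Finally, the monomials $x_1^ax_2^bx_3^c$ for distinct triples are distinct, so they are linearly independent in $\mathbb{Z}[x_1,x_2,x_3]$. The coefficient of $x_1^ax_2^bx_3^c$ is therefore exactly $N_{a,b,c}$, which is the claim. If some triple has $N_{a,b,c}=0$, the statement still holds, with coefficient zero.

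There is no real obstacle here. The only point to state carefully is that distinct colorings never combine into a different monomial: the contributions simply add up as integer coefficients, and uniqueness of coefficient representation in a polynomial ring lets us read off the counts.
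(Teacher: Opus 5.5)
Your proposal is correct and follows essentially the same route as the paper. You rewrite each term $\prod_{v} x_{f(v)}$ as the monomial $x_1^{|f^{-1}(1)|}x_2^{|f^{-1}(2)|}x_3^{|f^{-1}(3)|}$, partition $\mathcal{LC}((G,L),K_3)$ by color-class size vector, and read off the coefficient; your remark that distinct monomials are linearly independent only makes explicit a step the paper uses implicitly.
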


\begin{proof}
Let $\mathcal{F}\coloneqq \mathcal{LC}((G,L),K_3)$.
By~\eqref{eq:partition-poly},
\[
p_{(G,L)\to K_3}(x_1,x_2,x_3)
=
\sum_{f\in\mathcal{F}} \prod_{v\in V(G)} x_{f(v)}.
\]
For each $f\in\mathcal{F}$ and each $i\in\{1,2,3\}$, define
\[
\alpha_i(f)\coloneqq \bigl|\{v\in V(G): f(v)=i\}\bigr|.
\]
Then
\[
\prod_{v\in V(G)} x_{f(v)}
=
x_1^{\alpha_1(f)}x_2^{\alpha_2(f)}x_3^{\alpha_3(f)}.
\]
For every triple $(a',b',c')\in\mathbb{Z}_{\ge 0}^3$ with $a'+b'+c'=n$, let
\[
\mathcal{F}_{a',b',c'}
\coloneqq 
\{\,f\in\mathcal{F}:\alpha_1(f)=a',\ \alpha_2(f)=b',\ \alpha_3(f)=c'\,\}.
\]
Then we have
\[
\mathcal{F}
=
\biguplus_{\substack{(a',b',c')\in\mathbb{Z}_{\ge 0}^3\\ a'+b'+c'=n}}
\mathcal{F}_{a',b',c'}.
\]
Using this partition, we obtain
\[
p_{(G,L)\to K_3}(x_1,x_2,x_3)
=
\sum_{\substack{(a',b',c')\in\mathbb{Z}_{\ge 0}^3\\ a'+b'+c'=n}}
|\mathcal{F}_{a',b',c'}|\;x_1^{a'}x_2^{b'}x_3^{c'}.
\]
Hence the coefficient of $x_1^a x_2^b x_3^c$ in $p_{(G,L)\to K_3}$ equals
$|\mathcal{F}_{a,b,c}|$, as claimed.
\end{proof}

\begin{lemma}\label{lem:eq-sum}
Let $n=3q+r$ with $r\in\{0,1,2\}$, and let
$p\coloneqq p_{(G,L)\to K_3}(x_1,x_2,x_3)$ be as in~\eqref{eq:partition-poly}.
Then the number of equitable proper list $3$-colorings of $(G,L)$, denoted by $\mathbf{\Lambda}(G,L)$, is
\[
\mathbf{\Lambda}(G,L)=
\begin{cases}
[x_1^q x_2^q x_3^q]\,p, & r=0,\\[2mm]
[x_1^{q+1}x_2^q x_3^q]\,p+[x_1^q x_2^{q+1}x_3^q]\,p+[x_1^q x_2^q x_3^{q+1}]\,p, & r=1,\\[2mm]
[x_1^{q+1}x_2^{q+1}x_3^q]\,p+[x_1^{q+1}x_2^q x_3^{q+1}]\,p+[x_1^q x_2^{q+1}x_3^{q+1}]\,p, & r=2.
\end{cases}
\]
\end{lemma}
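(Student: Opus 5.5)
The proof is a direct combination of \Cref{lem:coeff-counts} with the characterization of equitable size vectors given at the start of this section. Let $n=3q+r$ with $r\in\{0,1,2\}$. For a list $3$-coloring $f$, write $\alpha(f)=(\alpha_1(f),\alpha_2(f),\alpha_3(f))$ for its vector of color-class sizes. By definition, $f$ is equitable exactly when $\alpha(f)$ lies in the set
\[
A_r\coloneqq\bigl\{(a,b,c)\in\mathbb{Z}_{\ge0}^3 : a+b+c=n,\ a,b,c\in\{q,q+1\},\ |\{i:\alpha_i=q+1\}|=r\bigr\}.
\]
This equivalence is the case $k=3$ of the Euclidean-division argument in \Cref{lem:root-condition-cw}. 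That argument used only the size vector and not the structure of the coloring, so it transfers verbatim to list colorings.

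I will list $A_r$ explicitly in each case. For $r=0$, $A_0=\{(q,q,q)\}$. For $r=1$, $A_1$ consists of the three permutations of $(q+1,q,q)$. For $r=2$, $A_2$ consists of the three permutations of $(q+1,q+1,q)$. In the cases $r=1,2$ the three listed triples are pairwise distinct, since exactly one coordinate differs from the other two. Hence the three monomials appearing in the displayed formula of \Cref{lem:eq-sum} are distinct, and no coloring is counted twice.

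The set of list $3$-colorings is the disjoint union of the classes $\mathcal{F}_{a,b,c}$ over all triples with $a+b+c=n$. Therefore
\[
\mathbf{\Lambda}(G,L)=\sum_{(a,b,c)\in A_r}|\mathcal{F}_{a,b,c}|.
\]
By \Cref{lem:coeff-counts}, each term satisfies $|\mathcal{F}_{a,b,c}|=[x_1^ax_2^bx_3^c]\,p$. Substituting the explicit description of $A_r$ from each case gives exactly the three displayed formulas.

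The only point needing care is the equivalence between equitability and membership of $\alpha(f)$ in $A_r$, that is, ruling out a different base value $q'\neq q$. This is settled by the uniqueness of Euclidean division, as in \Cref{lem:root-condition-cw}; everything else is bookkeeping.
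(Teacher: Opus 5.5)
Your proof is correct and takes the same route as the paper: identify the equitable ordered size triples for $n=3q+r$, and sum the coefficients that \Cref{lem:coeff-counts} attaches to the disjoint classes $\mathcal{F}_{a,b,c}$. You are slightly more careful than the paper in deriving the size characterization from the Euclidean-division argument of \Cref{lem:root-condition-cw} and in checking that the three listed triples are distinct; one minor fix is that the condition $|\{i:\alpha_i=q+1\}|=r$ in your definition of $A_r$ should count the coordinates of $(a,b,c)$, not of $\alpha(f)$.
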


\begin{proof}
Write $n=3q+r$ with $r\in\{0,1,2\}$.
If $(a,b,c)$ are the sizes of the three color classes in an equitable $3$-coloring, then the only possible
multisets $\{a,b,c\}$ are:
$\{q,q,q\}$ if $r=0$,
$\{q+1,q,q\}$ if $r=1$,
and $\{q+1,q+1,q\}$ if $r=2$.
By~\Cref{lem:coeff-counts}, for every ordered triple $(a,b,c)$ with $a+b+c=n$, the coefficient
$[x_1^a x_2^b x_3^c]\,p$ equals the number of list $3$-colorings of $(G,L)$ whose color-class sizes are exactly
$(a,b,c)$.
Therefore:
\begin{itemize}
    \item if $r=0$, the only feasible ordered triple is $(q,q,q)$;
    \item if $r=1$, the feasible ordered triples are
    $(q+1,q,q)$, $(q,q+1,q)$, and $(q,q,q+1)$;
    \item if $r=2$, the feasible ordered triples are
    $(q+1,q+1,q)$, $(q+1,q,q+1)$, and $(q,q+1,q+1)$.
\end{itemize}
Summing the corresponding coefficients yields exactly $\mathbf{\Lambda}(G,L)$.
\end{proof}
We need the following consequence of~\cite{groenland2019h}.

\begin{theorem}[Groenland, Okrasa, Rzążewski, Scott, Seymour, and Spirkl~\cite{groenland2019h}]\label{thm:subexpo-P_t-groen-etal}
Let $t\ge 4$ and let $H$ be a multigraph such that for all distinct $h,h'\in V(H)$,
$|N_H(h)\cap N_H(h')|\le 1$.
Then for every $P_t$-free graph $G$ on $n$ vertices and every list assignment
$L:V(G)\to 2^{V(H)}$, the polynomial $p_{(G,L)\to H}(x)$ can be computed in time
$2^{O(\sqrt{n\log n})}\cdot \mathrm{poly}(n)$.
\end{theorem}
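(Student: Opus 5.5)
The statement is quoted from~\cite{groenland2019h}, so in the paper it is simply invoked. If I were to reprove it, I would follow the branch-and-separate scheme of that paper, with one adaptation: the recursion returns a polynomial in the variables $x_h$ rather than a count. Two algebraic facts make this adaptation free.
\begin{itemize}
\item If $G$ is the disjoint union of $G_1$ and $G_2$, then $p_{(G,L)\to H}=p_{(G_1,L)\to H}\cdot p_{(G_2,L)\to H}$.
\item If the set of list $H$-colorings is partitioned according to the colors on a set $S$ of vertices, then $p$ is the sum of the corresponding restricted polynomials. Each restricted polynomial is the monomial $\prod_{v\in S}x_{f(v)}$ times the polynomial of the remaining instance with updated lists.
\end{itemize}
For fixed $H$, every polynomial has at most $(n+1)^{|V(H)|}$ monomials with integer coefficients of $O(n)$ bits. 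So each multiplication or addition costs $\mathrm{poly}(n)$, and the running time is the number of recursion nodes times $\mathrm{poly}(n)$.

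\textbf{Preprocessing and measure.} A vertex with a one-element list is removed; its color contributes a monomial factor, and its neighbors' lists are intersected with $N_H$ of that color. Isolated vertices are handled directly. As progress measure I would use $\mu=\sum_v(|L(v)|-1)\le |V(H)|\,n$. Let $d=\sqrt{n\log n}$.

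\textbf{Branching on high degree.} Suppose some vertex $v$ has many neighbors $u$ whose list is not already contained in $N_H(h)$ for each candidate color $h\in L(v)$. Branch on $f(v)=h$. In each branch the neighbor lists shrink to $L(u)\cap N_H(h)$. The hypothesis $|N_H(h)\cap N_H(h')|\le 1$ is what forces real progress: a neighbor $u$ of $v$ keeps two or more options only if, for all but at most one choice of $h$, its list loses elements. A pigeonhole argument then gives, in all but a bounded number of branches, a decrease of $\mu$ by $\Omega(d)$. This yields a recurrence of the form $T(\mu)\le |V(H)|\cdot T(\mu-\Omega(d))+\dots$, which solves to $2^{O(\mu/d\cdot\log n)}=2^{O(\sqrt{n\log n})}$.

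\textbf{Low-degree case.} If no such vertex exists, then after discarding edges that can no longer constrain anything, the effective graph has maximum degree below $d$. I would use the Gyárfás path argument: a $P_t$-free connected graph has an induced path $P$ on at most $t-1$ vertices such that $N[P]$ is a balanced separator, meaning every component of $G-N[P]$ has at most $n/2$ vertices. Then $|N[P]|\le t\cdot d$, so I branch over all $|V(H)|^{O(td)}=2^{O(\sqrt{n\log n})}$ colorings of the separator. For each, the remaining components are recursed on independently and their polynomials multiplied. Since $n$ halves, there are $O(\log n)$ levels of this type. Together with the measure argument for the high-degree steps, the total is $2^{O(\sqrt{n\log n}\,\log n)}$; to get the sharper bound I would choose the threshold $d$ more carefully, as in~\cite{groenland2019h}.

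\textbf{Main obstacle.} The delicate step is making the high-degree branching genuinely decrease $\mu$ in every branch, or at least in all but a bounded number. This is exactly where the condition $|N_H(h)\cap N_H(h')|\le 1$ enters, through a careful definition of ``relevant'' high degree. It must also interleave correctly with the separator recursion, so that the depth of each kind of step is bounded simultaneously. I would follow the original potential-function analysis of~\cite{groenland2019h}. The polynomial bookkeeping adds only a $\mathrm{poly}(n)$ factor per node.
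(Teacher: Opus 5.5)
Treating the statement as imported from \cite{groenland2019h} is exactly what the paper does: it states the theorem and uses it without proof. Your reproof sketch, however, does not follow the route of \cite{groenland2019h}, and as written it does not reach the stated bound. Groenland et al.\ handle the low-degree case by proving that a $P_t$-free graph of maximum degree $\Delta$ has pathwidth linear in $\Delta$ (for fixed $t$). They then run a single dynamic program over a path decomposition, so no recursion of depth $\log n$ ever arises. Your recursion on Gy\'arf\'as separators pays $|V(H)|^{O(td)}$ on each of $\log n$ levels. With a uniform threshold this gives only $2^{O(\sqrt{n\log n}\,\log n)}$, as you note, or $2^{O(\sqrt{n}\log n)}$ after rebalancing with $d=\sqrt n$. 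So the deferral ``choose $d$ more carefully, as in \cite{groenland2019h}'' points to the wrong place. Within your framework, the fix is to let the threshold depend on the current instance. On an instance with $m$ vertices, use $d(m)=\sqrt{m\log m}$ both for the branching phase and for the separator step that follows. Then $F(m)\le 2^{O(\sqrt{m\log m})}\cdot m\cdot F(m/2)$, and since the exponents $\sqrt{(n/2^j)\log n}$ form a geometric series, $F(n)=2^{O(\sqrt{n\log n})}$. With this change your argument is a legitimate alternative that needs only the Gy\'arf\'as path lemma instead of the pathwidth theorem.

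The step you flag as the main obstacle is in fact short, and you should pin it down rather than defer it. After propagating singleton lists, every remaining vertex $u$ has $|L(u)|\ge 2$. Then $L(u)\subseteq N_H(h)$ holds for at most one $h$, since otherwise two distinct colours would share at least $|L(u)|\ge 2$ common neighbours. Call $v$ heavy if it has at least $d$ neighbours in the current graph. Each neighbour's list fails to shrink for at most one colour $h$, so at most one $h\in L(v)$ leaves more than half of them unshrunk. Every other branch lowers $\mu$ by at least $d/2$, and the exceptional branch lowers it by $|L(v)|-1\ge 1$. This gives $T(\mu)\le T(\mu-1)+(|V(H)|-1)\,T(\mu-d/2)$, hence $2^{O((\mu/d)\log n)}$ leaves. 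The same observation shows that every edge between two vertices with non-singleton lists still constrains the colouring. So once no heavy vertex remains, the current graph already has maximum degree below $d$, and it is an induced, hence $P_t$-free, subgraph of $G$. Do not ``discard edges'' instead: deleting edges can create induced copies of $P_t$, and then the Gy\'arf\'as argument no longer applies.
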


We are now ready to prove~\Cref{thm:PT}.

\begin{proof}[Proof of Theorem~\ref{thm:PT}]
If $t=1$ or $t=2$, then $G$ is edgeless.
In this case every list $3$-coloring is simply a choice of an allowed color from $L(v)$ for each vertex $v$, and the
number of equitable list $3$-colorings can be computed in polynomial time by a dynamic program whose state records the three
current color-class sizes.
If $t=3$, then $G$ is a disjoint union of cliques.
A clique component of size at least $4$ has no $3$-coloring, so in that case the answer is $0$. Otherwise, each component has size at most $3$, and one can again use a polynomial-time dynamic programming algorithm over the components, where the state records the three current color-class sizes and the transitions respect the list assignment on the vertices of the next component.
Hence we may assume that $t\ge 4$.
We invoke~\Cref{thm:subexpo-P_t-groen-etal} with $H=K_3$.
Since for all distinct $h,h'\in V(K_3)$ we have
$|N_{K_3}(h)\cap N_{K_3}(h')|=1$,
the theorem applies, and we can compute the polynomial
$p\coloneqq p_{(G,L)\to K_3}(x_1,x_2,x_3)$
in time $2^{O(\sqrt{tn\log n})}\cdot \mathrm{poly}(n)$.
Write $n=3q+r$ with $r\in\{0,1,2\}$.
By~\Cref{lem:eq-sum}, the desired number of equitable list $3$-colorings is exactly the corresponding sum of
coefficients of $p$. Since every monomial of $p$ has total degree $n$, the polynomial $p$ contains at most $\binom{n+2}{2}=O(n^2)$
monomials. Moreover, each coefficient is at most $3^n$ and therefore has $O(n)$ bits. Hence, once $p$ is computed, extracting the required coefficients and summing them takes only $\mathrm{poly}(n)$ time.
Correctness follows from~\Cref{lem:eq-sum}.
\end{proof}

\bibliographystyle{abbrvurl}
\bibliography{ref}

\end{document}